\newenvironment{algo}{%
  \algorithm
}{%
  \endalgorithm
}
\newcommand{\R}{\mathbb{R}}
\newcommand{\T}{\mathcal{T}}
\newcommand{\Epsilon}{\mathcal{E}}
\newcommand{\Vms}{V^{\operatorname*{ms}}}
\newtheorem{theorem}{Theorem}[section]
\newtheorem{lemma}[theorem]{Lemma}
\theoremstyle{definition}
\newtheorem{definition}[theorem]{Definition}
\newtheorem{remark}[theorem]{Remark}
\newtheorem{conclusion}[theorem]{Conclusion} %
\newcommand{\bfs}[1]{{\boldsymbol #1}}
\definecolor{light-gray}{gray}{0.69}
\definecolor{light-red}{rgb}{1.0,0.4,0.4}
\definecolor{light-blue}{rgb}{0.4,0.45,1}
\definecolor{light-green}{rgb}{0.5,0.8,0.0}
\definecolor{dark-green}{rgb}{0.0,0.4,0.0}
\definecolor{dark-red}{rgb}{1.0,0.3,0.3}
\definecolor{dark-gray}{gray}{0.59}
\definecolor{very-dark-gray}{gray}{0.39}
\definecolor{lighter-red}{rgb}{1.0,0.6,0.6}
\definecolor{ocker_hell}{rgb}{0.75,0.7,0.4}
\definecolor{gelb_dunkel}{rgb}{0.75,0.7,0.0}
\definecolor{gruen_hell}{rgb}{0.5,0.8,0.0}
\definecolor{dark-blue}{rgb}{0.0,0.0,0.5}
\definecolor{new-blue}{rgb}{0.0,0.0,0.8}
\definecolor{lila}{rgb}{0.5,0.0,0.5}
\definecolor{dark-red}{rgb}{0.5,0.0,0.0}
\renewcommand*{\@seccntformat}[1]{\csname the#1\endcsname\hspace{0.3em}}
\renewcommand\section{\@startsection{section}{1}{0pt}%
                                          {8pt plus 4pt minus 4pt}%
                                          {.01pt}%
                                          {\bf}}
\renewcommand\subsection{\@startsection{subsection}{2}{0pt}%
                                          {-1.3ex plus -0.5ex minus -.9ex}%
                                          {-2pt}%
                                          {\bf} }
\renewcommand\subsubsection{\@startsection{subsubsection}{3}{0pt}%
                                          {-.5ex plus -.5ex minus -.2ex}%
                                          {-10pt}%
                                          {\bf\em}}
\titlespacing{\section}{0pt}{2ex}{1ex}
\titlespacing{\subsection}{0pt}{-1ex}{0ex}
\titlespacing{\subsubsection}{0pt}{0.5ex}{0ex}
\begin{document}

\renewcommand{\thefootnote}{\fnsymbol{footnote}}\setcounter{footnote}{0}

\begin{center}
{\LARGE On Multiscale Methods in Petrov-Galerkin formulation
\footnote{D. Elfverson and P. Henning were supported by The G\"{o}ran Gustafsson Foundation and The Swedish Research Council.}}\\[2em]
\end{center}

\renewcommand{\thefootnote}{\fnsymbol{footnote}}
\renewcommand{\thefootnote}{\arabic{footnote}}

\begin{center}
{\large Daniel Elfverson\footnote[1]{Department of Information Technology, Uppsala University, Box 337, SE-751 05 Uppsala, Sweden},
           Victor Ginting\footnote[2]{Department of Mathematics, University of Wyoming, Laramie, Wyoming 82071, USA}, 
           Patrick Henning\footnote[3]{Section de Math\'{e}matiques, \'{E}cole polytechnique f\'{e}d\'{e}rale de Lausanne, 1015 Lausanne, Switzerland}}\\[2em]
\end{center}

\begin{center}
{\large{\today}}
\end{center}

\begin{center}
\end{center}

\begin{abstract}
In this work we investigate the advantages of multiscale methods in Petrov-Galerkin (PG) formulation in a general framework. The framework is based on a localized orthogonal decomposition of a high dimensional solution space into a low dimensional multiscale space with good approximation properties and a high dimensional remainder space{, which only contains negligible fine scale information}. The multiscale space can then be used to obtain accurate Galerkin approximations. As a model problem we consider the Poisson equation. We prove that a Petrov-Galerkin formulation does not suffer from a significant loss of accuracy, and still preserve the convergence order of the original multiscale method. We also prove inf-sup stability of a PG Continuous and a Discontinuous Galerkin Finite Element multiscale method. Furthermore, we demonstrate that the Petrov-Galerkin method can decrease the computational complexity significantly, allowing for more efficient solution algorithms. As another application of the framework, we show how the Petrov-Galerkin framework can be used to construct a locally mass conservative solver for
two-phase flow simulation that employs
 the Buckley-Leverett equation. To achieve this, we couple a PG Discontinuous Galerkin Finite Element method with an upwind scheme for a hyperbolic conservation law.
\end{abstract}

\paragraph*{Keywords}
finite element, multiscale method, numerical homogenization, Petrov-Galerkin method, conservation law, Buckley-Leverett equation


\section{Introduction}
\setcounter{equation}{0}
In this contribution we consider linear elliptic problems with a heterogenous and highly variable diffusion coefficient $A$ as arisen often in hydrology or in material sciences. In the following, we are looking for $u$ which solves 
\begin{equation*}\label{eq:model}
  \begin{aligned}
    -\nabla \cdot A\nabla u &= f\quad \hspace{6pt}\text{in }\Omega, \\
    u & = 0 \quad \hspace{7pt}\text{on }\partial \Omega,
  \end{aligned}
\end{equation*}
in a weak sense. Here, we denote
\begin{itemize}
\item[(A1)] $\Omega\subset\mathbb{R}^{d}$, $d=1,2,3$, a bounded Lipschitz domain with a piecewise polygonal boundary,
\item[(A2)] $f \in L^2(\Omega)$ a source term, and
\item[(A3)] $A\in L^{\infty}(\Omega,\mathbb{R}^{d\times d}_{sym})$ a symmetric matrix-valued function with uniform spectral bounds $\beta_0\geq\alpha_0>0$, 
\begin{equation}\label{e:spectralbound}
\sigma(A(x))\subset [\alpha_0,\beta_0]\quad\text{for almost all }x\in \Omega.
\end{equation}
We call the ratio $\beta_0/\alpha_0$ the {\it contrast} of $A$.
\end{itemize}
Under assumptions (A1)-(A3) and by the Lax-Milgram theorem, there exists a unique weak solution $u \in H^{1}_0(\Omega)$ to
\begin{align}
\label{exact-solution} a(u,v) = (f,v) \qquad \mbox{for all } v \in H^1_0(\Omega),
\end{align}
where
\begin{align*}
a(v,w):= \int_{\Omega} A \nabla v \cdot \nabla w \quad \mbox{and} \quad (v,w):=(v,w)_{L^2(\Omega)}.
\end{align*}

The problematic term in the equation is the diffusion matrix $A$, which is known to exhibit very fast variations on a very fine scale (i.e. it has a multiscale character). These variations can be highly heterogenous and unstructured, which is why it is often necessary to resolve them globally by an underlying computational grid that matches the said heterogeneity. Using standard finite element methods, this results in high dimensional solution spaces and hence an enormous computational demand, which often cannot be handled even by today's computing technology. Consequently, there is a need for alternative methods, so called multiscale methods, which can either operate below linear computational complexity by using local representative elements
(cf. 
\cite{Abdulle:2005,Abdulle:E:Engquist:Vanden-Eijnden:2012,E:Engquist:2003,Gloria:2006,Gloria:2011,Henning:Ohlberger:2009,Ohlberger:2005}) or which can split the original problem into very localized subproblems that cover $\Omega$ but that can be solved cheaply and independent from each other (cf. \cite{MR2801210,bgp-cam-13,MR2281625,Efendiev:Hou:Ginting:2004,MR2062582,Hou:Wu:1997,Owhadi:Zhang:Berlyand:2013,Hughes:1995,MR1660141,Larson:Malqvist:2005,Malqvist:2011,Owhadi:Zhang:2011}).

In this paper, we focus on a rather recent approach called Localized Orthogonal Decomposition (LOD) that was introduced by M{\aa}lqvist and Peterseim \cite{Malqvist:Peterseim:2011} and further generalized in \cite{HP12,Henning:Malqvist:2013}. 

We consider a coarse space $V_H$, which is low-dimensional but possibly inadequate for finding a reliable Galerkin approximation to the multiscale solution of problem (\ref{exact-solution}). The idea of the method is to start from this coarse space
and to update the corresponding set of basis functions step-by-step to improve the approximation properties of the space. In a summarized form, this can be described in four steps: 1) define a (quasi) interpolation operator $I_H$ from $H^1_0(\Omega)$ onto $V_H$, 2) information in the kernel of the interpolation operator is considered to be negligible (having a small $L^2$-norm), 3) hence define the space of negligible information by the kernel of this interpolation, i.e. $W:=$kern$(I_H)$, and 4) find the orthogonal complement of $W$ with respect to a scalar product $a_h(\cdot,\cdot)$, where $a_h(\cdot,\cdot)$ describes a discretization of the problem to solve. In many cases, it can be shown, that this (low dimensional) orthogonal complement space has very accurate approximation properties with respect to the exact solution. Typically, the computation of the orthogonal decomposition is localized to small patches in order to reduce the computational complexity.

So far, the concept of the LOD has been successfully applied to nonlinear elliptic problems \cite{Henning:Malqvist:Peterseim:2013},
eigenvalue problems \cite{Malqvist:Peterseim:2012} and
the nonlinear Schr\"odinger equation \cite{Henning:Malqvist:Peterseim:2013-2}. Furthermore, it was combined with a discontinuous Galerkin method \cite{EGM13,EGMP13} and extended to the setting of partition of unity methods \cite{Henning:Morgenstern:Peterseim:2013}.

In this work, we are concerned with analyzing the LOD framework in Petrov-Galerkin formulation, i.e. for the case that the discrete trial and test spaces are not identical. We show that an LOD method in Petrov-Galerkin formulations still preserves the convergence rates of the original formulation of the method. At the same time, the new method can exhibit significant advantages, such as decreased computational complexity and mass conservation properties. In this paper, we discuss these advantages in detail; we give examples for realizations and present numerical experiments. In particular, we apply the proposed framework to design a locally conservative multiscale solver for the simulation of two-phase flow models as governed by the Buckley-Leverett equation. We remark that employing  Petrov-Galerkin variational frameworks in the construction and analysis of multiscale methods for solving elliptic problems in heterogeneous media has been investigated in the past, see for example \cite{Hou:Wu:Zhang:2004} and \cite{MR2062582}.

The rest of the paper is organized as follows. Section~\ref{sec:disc} lays out  the setting and notation for the formulation of the multiscale methods that includes the description of two-grid discretization and the Localized Orthogonal Decomposition (LOD). In Section~\ref{sec:methods}, we present the multiscale methods based on the LOD framework, starting from the usual Galerkin variational equation and concentrating further on the Petrov-Galerkin variational equation that is the main contribution of the paper. We establish in this section that the Petrov-Galerkin LOD (PG-LOD) exhibits the same convergence behavior as the usual Galerkin LOD (G-LOD). Furthermore, we draw a contrast in the aspect of practical implementation that makes up a strong advantage of PG-LOD in relative comparison to G-LOD. The other advantage of the PG-LOD which cannot be achieved with G-LOD is the ability to produce a locally conservative flux field at the elemental level when discontinuous finite element is utilized. We also discuss in this section an application of the PG-LOD for solving the pressure equation in the simulation of two-phase flow models to demonstrate this particular advantage. Section~\ref{section:numerical:experiments} gives two sets of numerical experiment: one that confirms the theoretical finding and the other demonstrating the application of PG-LOD in the two-phase flow simulation. We present the proofs of the theoretical findings in Section~\ref{section:proofs:pg:lod}.

\section{Discretization}
\label{sec:disc}
\setcounter{equation}{0}
In this section we introduce notations that are required for the formulation of the multiscale methods.

\subsection{Abstract two-grid discretization}
\label{subsection:two-grid:disc}

We define two different meshes on $\Omega$. The first mesh is a 'coarse mesh' and is denoted by $\T_H$,
where $H>0$ denote the maximum diameter of all elements of $\T_H$.  The second mesh is a 'fine mesh' denoted by $\T_h$ with
$h$ representing the maximum diameter of all elements of $\T_h$. By 'fine' we mean that any variation of the coefficient $A$ is resolved within this grid, leading to a high dimensional discrete space that is associated with this mesh. The mesh $\T_h$ is assumed to be a (possibly non-uniform) refinement of $\T_H$. Furthermore, both grids are shape-regular and conforming partitions of $\Omega$ and we assume that $h<H/2$. For the subsequent methods to make sense, we also assume that each element of $\T_H$ is at least twice uniformly refined to create $\T_h$. The set of all Lagrange points (vertices) of $\T_\star$ is denoted by $\mathcal{N}_\star$, and the set of interior Lagrange points is denoted by $\mathcal{N}_\star^0$, where $\star$ is either
$H$ or $h$.

Now we consider an abstract discretization of the exact problem (\ref{exact-solution}). For this purpose, we let $V_h$ denote a high dimensional discrete space in which we seek an approximation $u_h$ of $u$. A simple example would be the classical $P1$ Lagrange Finite Element space associated with $\T_h$. However, note that we do not assume that $V_h$ is a subspace of $H^1_0(\Omega)$. In fact, later we give an example for which $V_h$ consists of non-continuous piecewise linear functions. Next, we assume that we are interested in solving a fine scale problem, that can be characterized by a scalar product $a_h(\cdot,\cdot)$ on $V_h$. Accordingly, a method on the coarse scale can be described by some $a_H(\cdot,\cdot)$, which we specify by assuming
\begin{itemize}
\item[(A4)] $a_\star(\cdot,\cdot)$ is a scalar product on $V_\star$ where $\star$ is either $h$ or $H$.
\end{itemize}
This allows us to define the abstract reference problem stated below.

\begin{definition}[Fine scale reference problem] 
We call $u_h \in V_h$ the fine scale reference solution if it solves
\begin{align}
\label{fine-grid}a_h(u_h,v_h) = (f, v_h)_{L^2(\Omega)} \qquad \mbox{for all } v_h \in V_h,
\end{align}
where $a_h(\cdot,\cdot)$ 'describes the method'. It is implicitly assumed that problem (\ref{fine-grid}) is of tremendous computational complexity and cannot be solved by available computing resources i
n a convenient time. 
\end{definition}

A simple example of $a_h(\cdot,\cdot)$ is $a_h(v_h,w_h)= a_H(v_h,w_h)=a(v_h, w_h)$. A more complex example is the $a_h(\cdot,\cdot)$ that stems from a discontinuous Galerkin approximation, in which case $a_h(\cdot,\cdot)$ is different from $a_H(\cdot,\cdot)$. 
The goal is to approximate problem (\ref{fine-grid}) by a new problem that reaches a comparable accuracy but one that can be solved with a significantly lower computational demand.

\subsection{Localized Orthogonal Decomposition}
\label{subsection:lod-framework}
In this subsection, we introduce the notation that is required in the formulation of the multiscale method. In particular, we introduce an orthogonal decomposition of the high dimensional solution space $V_h$ into the orthogonal direct sum of a low dimensional space with good approximation properties and a high dimensional remainder space.
For this purpose, we make the following abstract assumptions.
\begin{itemize}
\item[(A5)] $||| \cdot |||_h$ denotes a norm on $V_h$ that is equivalent to the norm that is induced by $a_h(\cdot,\cdot)$, hence there exist generic constants $0<\alpha \le \beta$ such that
\begin{align*}
\alpha ||| v_h |||_h^2 \le a_h(v_h,v_h) \quad \mbox{and} \quad a_h(v_h,w_h) \le \beta||| v_h |||_h ||| w_h |||_h \quad \mbox{for all } v_h,w_h \in V_h.
\end{align*}
In the same way, $||| \cdot |||_H$ denotes a norm on $V_H$ (equivalent to the norm induced by $a_H(\cdot,\cdot)$). Furthermore, we let $C_{H,h}$ denote the constant with $|||v|||_{H} \le C_{H,h} |||v|||_{h}$ for all $v\in V_h$. Note that $C_{H,h}$ might degenerate for $h\rightarrow 0$.
\item[(A6)] The coarse space $V_H \subset V_h$ is a low dimensional subspace of $V_h$ that is associated with $\T_H$.
\item[(A7)] Let $I_H : V_h \rightarrow V_H$ be an $L^2$-stable quasi-interpolation (or projection) operator with the properties
\begin{itemize}
\item there exists a generic constant $C_{I_H}$ (only depending on the shape regularity of $\T_H$ and $\T_h$) such that for all $v_h \in V_h$ and $v_H \in V_H$
\begin{align*}
\nonumber \|v_h-I_H(v_h)\|_{L^{2}(\Omega)} &\leq C_{I_H} H ||| v_h |||_h, \quad \mbox{and} \quad
\nonumber ||| I_H(v_h) |||_H \leq C_{I_H} ||| v_h |||_h,\\
 \|v_H-I_H(v_H)\|_{L^{2}(\Omega)} &\leq C_{I_H} H ||| v_H |||_H , \quad \mbox{and} \quad
\nonumber \| I_H(v_H) \|_{L^{2}(\Omega)} \leq C_{I_H} ||| v_H |||_H.
\end{align*}
\item the restriction of $I_H$ to $V_H$ is an isomorphism with $||| \cdot |||_H$-stable inverse, i.e. we have $v_H= (I_H \circ ( I_H\vert_{V_H})^{-1})(v_H)$ for $v_H \in V_H$ and the exists a generic $C_{I_H^{-1}}$ such that
\begin{equation*}
\nonumber ||| ( I_H\vert_{V_H})^{-1}(v_H) |||_H \leq C_{I_H^{-1}} ||| v_H |||_H \quad \mbox{for all } v_H \in V_H.
\end{equation*}
\end{itemize}
\end{itemize}
Typically, $L^2$-projections onto $V_H$ can be verified to fulfill assumption (A7). Similarly, $I_H$ can be a quasi-interpolation of the Cl\'ement-type that is related to the $L^2$-projection. An example for this case is given in equation (\ref{def-weighted-clement}) below. Alternatively, $I_H$ can be also constructed from local $L^2$-projections as it is done for the classical Cl\'ement interpolation. Nodal interpolations typically do not satisfy (A7).

Using the assumption that $(I_H)_{|V_H}: V_H \rightarrow V_H$ is an isomorphism (i.e. assumption (A7)), a splitting of the space $V_h$ is given by the direct sum
\begin{align}
\label{def-W_h}V_h = V_H  \oplus W_h, \quad \mbox{with} \enspace W_h := \{ v_h \in V_h | \hspace{2pt} I_H(v_h) = 0 \}.
\end{align}
Observe that the 'remainder space' $W_h$ contains all fine scale features of $V_h$ that cannot be expressed in the coarse space $V_H$. 

Next, consider the $a_h(\cdot,\cdot)$-orthogonal projection $P_h:V_h \rightarrow W_h$ that fulfills:
\begin{align}
\label{projection-orthogonal} a_h( P_h(v_h) , w_h ) = a_h ( v_h , w_h ) \qquad \mbox{for all } w_h \in W_h.
\end{align}
Since $V_h = V_H  \oplus W_h$, we have that $\Vms_{\Omega}:=\mbox{kern}(P_h)=(1-P_h)(V_H)$ induces the $a_h(\cdot,\cdot)$-orthogonal splitting
$$V_h = \Vms_{\Omega} \oplus W_h.$$

Note that $\Vms_{\Omega}$ is a low dimensional space in the sense that it has the same dimension as $V_H$. As shown for several applications (cf. \cite{Malqvist:Peterseim:2012,Henning:Malqvist:Peterseim:2013,Henning:Malqvist:Peterseim:2013-2}) the space $\Vms_{\Omega}$ has very rich approximation properties in the $|||\cdot |||_h$-norm. However, it is very expensive to assemble $\Vms_{\Omega}$, which is why it is practically necessary to localize the space $W_h$ (respectively localize the projection). This is done using admissible patches of the following type.

\begin{definition}[Admissible patch]
\label{admissible-patch}
For any coarse element $T \in \T_H$, we say that the open and connected set $U(T)$ is an {\it admissible patch} of $T$, if $T \subset U(T) \subset \Omega$ and if it consists of elements from the fine grid, i.e.
\begin{align*}
    U(T) = \operatorname{int}\bigcup_{{\tau} \in \T_h^U} \overline{\tau}, \quad \mbox{where} \enspace
    \T_h^U \subset \T_h.
\end{align*}
\end{definition}
\noindent
It is now relevant to define the restriction of $W_h$ to an admissible patch $U(T)\subset \Omega$ by
$$
\mathring{W}_h(U(T)):=\{ v_h \in W_h| \hspace{2pt} v_h=0 \enspace \mbox{in } \Omega \setminus U(T) \}.
$$ 

A general localization strategy for the space $\Vms_{\Omega}$ can be described as follows (see \cite{Henning:Malqvist:2013} for a special case of this localization and \cite{Malqvist:Peterseim:2011} for a different localization strategy).

\begin{definition}[Localization of the solution space]
\label{localization-of-solution-space}
Let the bilinear form $a_h^T( \cdot , \cdot )$ be a localization of $a_h(\cdot,\cdot)$ on $T \in \T_H$ in the sense that
\begin{align}
\label{localization-of-a_h-to-a_h-T}a_h ( v_h , w_h ) &= \sum_{T \in \T_H } a_h^T( v_h, w_h ),
\end{align}
where $a_h^T(\cdot,\cdot)$ acts only on $T$ or a small environment of $T$.
Let furthermore $U(T)$ be an admissible patch associated with $T\in \T_H$. Let $Q_h^T: V_h \rightarrow \mathring{W}_h(U(T))$ be a local correction operator
that is defined as finding $Q_h^{T}(\phi_h) \in \mathring{W}_h(U(T))$  satisfying
\begin{align}
\label{local-corrector-problem} a_h( Q_h^{T}(\phi_h), w_h ) = - a_h^T ( \phi_h , w_h ) \qquad \mbox{for all } w_h \in \mathring{W}_h(U(T)),
\end{align}
where $\phi_h \in V_h$.
The global corrector is given by
\begin{align}
\label{global-corrector}Q_h(\phi_h):=\sum_{T\in \T_H} Q_h^{T}(\phi_h).
\end{align}
A (localized) generalized finite element space is defined as
$$
\Vms:=\{ \Phi_H + Q_h(\Phi_H) | \hspace{2pt} \Phi_H \in V_H \}.
$$ 
\end{definition}
The variational formulation \eqref{local-corrector-problem} is called the corrector problem associated with $T \in \T_H$. Solvability of each of these problems is guaranteed by the Lax-Milgram Theorem. By its nature, the system matrix corresponding to (\ref{local-corrector-problem}) is localized to the patch $U(T)$ since the support of $w_h$ is in $U(T)$. Furthermore, each of (\ref{local-corrector-problem}) pertaining to $T \in \T_H$ is designed to be elementally independent and thus attributing to its immediate parallelizability. The corrector problems are solved in a preprocessing step and can be reused for different source terms and for different realization of the LOD methods. Since $\Vms$ is a low dimensional space with locally supported basis functions, solving a problem in $\Vms$ is rather inexpensive. Normally, the solutions $Q_h^{T}(\phi_h)$ of (\ref{local-corrector-problem}) decays exponentially to zero outside of $T$. This is the reason why we can hope for good approximations even for small patches $U(T)$. Later, we quantify this decay by an abstract assumption (which is known to hold true for many relevant applications).

\begin{remark}\label{remark-on-projection}
If $U(T)=\Omega$ for all $T\in \T_H$, then $Q_h=- P_h$, where $P_h$ is the orthogonal projection given by (\ref{projection-orthogonal}). In this sense, $\Vms$ is localization of the space $\Vms_{\Omega}$. This can be verified using (\ref{localization-of-a_h-to-a_h-T}), which yields
\begin{align*}
a_h( \phi_h + Q_h(\phi_h), w_h ) = \sum_{T \in \T_H } \left( a_h^T( \phi_h, w_h ) + a_h( Q_h^{T}(\phi_h), w_h ) \right) = 0 \qquad \mbox{for all } w_h \in W_h.
\end{align*}
By uniqueness of the projection, we conclude $Q_h=- P_h$.
\end{remark}

The above setting is used to construct the multiscale methods utilizing the Localized Orthogonal Decomposition Method (LOD) as e.g. done in \cite{Henning:Malqvist:2013,Malqvist:Peterseim:2011} for the standard finite element formulation and a corresponding Petrov-Galerkin formulation.

\section{Methods and properties}
\label{sec:methods}
\setcounter{equation}{0}
In this section, we state the LOD in Galerkin and in Petrov-Galerkin formulation along with their respective a priori error estimates and the inf-sup stability. In the last part of this section, we give two explicit examples and discuss the advantages of the Petrov-Galerkin formulation.
Subsequently we use the notation $a \lesssim b$ to abbreviate $a\leq Cb$, where $C$ is a constant that is independent of the mesh sizes $H$ and $h$; and which is independent of the possibly rapid oscillations in $A$.

In order to state proper a priori error estimates, we describe the notion of 'patch size' and how the size of $U(T)$ affects the final approximation.
All the stated theorems on the error estimates of the LOD methods are proved in Section~\ref{section:proofs:pg:lod}.

\begin{definition}[Patch size]
\label{category-k}Let $k\in \mathbb{N}_{>0}$ be fixed. We define patches $U(T)$ that consist of the element $T$ and $k$-layers of coarse element around it. For all $T\in\T_H $, we define element patches in the coarse mesh $\T_H$ by
\begin{equation}\label{def-patch-U-k}
    \begin{aligned}
      U_0(T) & := T, \\
      U_k(T) & := \cup\{T'\in \T_H\;\vert\; T'\cap U_{k-1}(T)\neq\emptyset\}\quad k=1,2,\ldots .
    \end{aligned}
\end{equation}
\end{definition}
\noindent
The above concept of patch sizes and patch shapes can be also generalized. See for instance \cite{Henning:Morgenstern:Peterseim:2013} for a LOD that is purely based on partitions of unity.
Using Definition \ref{category-k}, we make an abstract assumption on the decay of the local correctors $Q_h^T(\Phi_H)$ for $\Phi_H \in V_H$:

\begin{itemize}
\item[(A8)] Let $Q_h^{\Omega,T}(\Phi_H)$ be the {\it optimal} local corrector using $U(T)=\Omega$ that is defined according to (\ref{local-corrector-problem}) and let $Q_h^{\Omega}(\Phi_H):=\sum_{T\in\T_H}Q_h^{\Omega,T}(\Phi_H)$.
Let $k\in \mathbb{N}_{>0}$ and for all $T \in \T_H$ let $U(T)=U_k(T)$ as in Definition \ref{category-k}. Then there exists $p\in \{0,1\}$ and a generic constant $0<\theta<1$ that can depend on the contrast, but not on $H$, $h$ or the variations of $A$ such that for all $\Phi_H \in V_H$,
\begin{eqnarray}
\label{equation-influence-intersections}||| (Q_h - Q_h^{\Omega})(\Phi_H) |||_h^2 \lesssim k^d
\theta^{2 k} (1/H)^{2p} ||| \Phi_H+ Q_h^{\Omega}(\Phi_H) |||_h^2,
\end{eqnarray}
where, $Q_h(\Phi_H)$ denotes the global corrector given by (\ref{global-corrector}) for $U(T)=U_k(T)$.
\end{itemize}

Assumption (A8) quantifies the decay of local correctors, by stating that the solutions of the local corrector problems decay exponentially to zero outside of $T$. This is central for all a priori error estimates. For continuous Galerkin methods, we can obtain the optimal order $p=0$ for the exponent in \eqref{equation-influence-intersections}. This means, that the $(1/H)$-term fully vanishes. However, depending on the localization strategy (i.e. how $Q_h(\Phi_H)$ is computed) it is also possible that $p$ takes the value $1$ and that hence a pollution term of order $(1/H)$ arises (see \cite[Remark 3.8]{Henning:Malqvist:2013} for a discussion). For discontinuous Galerkin methods, the optimal known order is $p=1$. However, even for this case it is known that the $(1/H)$-term is rapidly overtaken by the decay, leading purely to slightly larger patch sizes (see e.g. \cite{Malqvist:Peterseim:2011}).

\subsection{Galerkin LOD}
This method was originally proposed in \cite{Malqvist:Peterseim:2011}: find
$u_H^{\text{\tiny G-LOD}} \in \Vms$ that satisfies
\begin{eqnarray}
\label{lod-problem-eq} a_h(u_H^{\text{\tiny G-LOD}},\Phi^{\operatorname*{ms}}) = ( f , \Phi^{\operatorname*{ms}} )
\hspace*{0.3cm} \text{for all} \hspace*{0.3cm} \Phi^{\operatorname*{ms}} \in \Vms.
\end{eqnarray}

\begin{theorem}[A priori error estimate for Galerkin LOD]\label{t:a-priori-local}$\\$
Assume (A1)-(A8). Given a positive $k\in\mathbb{N}_{>0}$, let for all $T \in\T_H$ the patch $U(T)=U_k(T)$ be defined as in (\ref{def-patch-U-k}) and
let $u_H^{\text{\tiny \em{G-LOD}}} \in \Vms$ be as governed by \eqref{lod-problem-eq}. Let $u_h \in V_h$ be the fine scale reference solution governed by (\ref{fine-grid}). Then, the following a priori error estimate holds true
\begin{align*}
\| u_h - ((I_H\vert_{V_H})^{-1} \circ I_H)(u_H^{\text{\tiny \em{G-LOD}}}) \|_{L^2(\Omega)} + ||| u_h - u_H^{\text{\tiny \em{G-LOD}}} |||_h&\lesssim (H + (1/H)^{p} k^{d/2} \theta^{k} ) \|f\|_{L^2(\Omega)},
\end{align*}
where $0<\theta<1$ and $p\in\{0,1\}$ are the generic constants in (A8).
\end{theorem}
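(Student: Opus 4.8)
The plan is to split the total error into an \emph{idealized} contribution, corresponding to correctors computed on the full domain $U(T)=\Omega$, and a \emph{localization} contribution accounting for the truncation to the patches $U_k(T)$. To this end I would introduce the idealized Galerkin LOD solution $\tilde u_H \in \Vms_{\Omega}$ defined by $a_h(\tilde u_H,\Phi)=(f,\Phi)$ for all $\Phi\in\Vms_{\Omega}$, estimate $|||u_h-\tilde u_H|||_h$ and $|||\tilde u_H-u_H^{\text{\tiny G-LOD}}|||_h$ separately, and combine them by the triangle inequality; the $L^2$-term would then be handled through the reconstruction operator at the end.

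For the idealized energy error, the structural fact I would exploit is that the splitting $V_h=\Vms_{\Omega}\oplus W_h$ is $a_h(\cdot,\cdot)$-orthogonal. Since both $u_h$ and $\tilde u_H$ solve the same variational problem tested against $\Vms_{\Omega}$, Galerkin orthogonality gives $a_h(u_h-\tilde u_H,\Phi)=0$ for all $\Phi\in\Vms_{\Omega}$, so $e:=u_h-\tilde u_H$ lies in $W_h=\operatorname{kern}(I_H)$. Testing \eqref{fine-grid} with $e$ and using $a_h(\tilde u_H,e)=0$ (by orthogonality) yields $\alpha|||e|||_h^2\le a_h(e,e)=(f,e)$. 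Because $I_H(e)=0$, I may subtract the vanishing term $(f,I_He)$ and invoke the $L^2$-approximation property in (A7), so that $(f,e)=(f,e-I_He)\le C_{I_H}H\|f\|_{L^2(\Omega)}\,|||e|||_h$, giving $|||u_h-\tilde u_H|||_h\lesssim H\|f\|_{L^2(\Omega)}$.

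The localization error $|||\tilde u_H-u_H^{\text{\tiny G-LOD}}|||_h$ is where I expect the main difficulty to lie, since the idealized and localized solutions inhabit the \emph{different} spaces $\Vms_{\Omega}$ and $\Vms$. Writing $\tilde u_H=\Phi_H^{\Omega}+Q_h^{\Omega}(\Phi_H^{\Omega})$ and $u_H^{\text{\tiny G-LOD}}=\Phi_H+Q_h(\Phi_H)$ with coarse parts in $V_H$, the discrepancy between the two multiscale spaces is controlled entirely by the corrector difference $Q_h-Q_h^{\Omega}$, which (A8) bounds by $k^{d/2}\theta^{k}(1/H)^{p}$ times the energy norm of the associated idealized multiscale function. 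I would insert this into a Strang/C\'ea-type perturbation argument: using coercivity of $a_h(\cdot,\cdot)$ on $\Vms$, continuity $a_h(v,w)\le\beta|||v|||_h|||w|||_h$, and the stability of the corrector operator, the difference of the two Galerkin solutions is bounded by the consistency error committed when the idealized solution is tested against the localized space, which is exactly of the form \eqref{equation-influence-intersections}. The delicate bookkeeping is twofold: converting the energy norm of the idealized multiscale function into the $\|f\|_{L^2(\Omega)}$-bound via the stability estimates of (A7) together with the $C_{I_H^{-1}}$-bound on $(I_H\vert_{V_H})^{-1}$, and faithfully tracking the $(1/H)^{p}$ factor. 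This step delivers $|||\tilde u_H-u_H^{\text{\tiny G-LOD}}|||_h\lesssim(1/H)^{p}k^{d/2}\theta^{k}\|f\|_{L^2(\Omega)}$, and summing with the idealized estimate gives the energy bound.

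Finally, for the $L^2$-term I would observe that $((I_H\vert_{V_H})^{-1}\circ I_H)(u_H^{\text{\tiny G-LOD}})=\Phi_H$, the coarse part of $u_H^{\text{\tiny G-LOD}}$, since $Q_h(\Phi_H)\in W_h=\operatorname{kern}(I_H)$ while $(I_H\vert_{V_H})^{-1}$ inverts $I_H$ on $V_H$. Writing $u_h-\Phi_H=(u_h-u_H^{\text{\tiny G-LOD}})+Q_h(\Phi_H)$, I would bound the first summand in $L^2$ by the already-established energy error through a Poincar\'e--Friedrichs-type inequality (valid for the concrete $|||\cdot|||_h$-norms, and derivable abstractly from (A7)), and the second summand, which lies in $W_h$, by $\|Q_h(\Phi_H)\|_{L^2(\Omega)}=\|Q_h(\Phi_H)-I_HQ_h(\Phi_H)\|_{L^2(\Omega)}\le C_{I_H}H|||Q_h(\Phi_H)|||_h\lesssim H\|f\|_{L^2(\Omega)}$, using corrector stability and the $\|f\|_{L^2(\Omega)}$-control of the coarse part. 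Both contributions are of the claimed order $H+(1/H)^{p}k^{d/2}\theta^{k}$, which closes the proof.
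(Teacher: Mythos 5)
Your proposal is correct and follows essentially the same route as the paper's proof: the paper likewise introduces the idealized solution $u_{H,\Omega}+Q_h^{\Omega}(u_{H,\Omega})\in \Vms_{\Omega}$, uses Galerkin orthogonality to place the idealized error $e_h$ in $W_h=\kernel(I_H)$ and deduce $||| e_h |||_h \lesssim H\|f\|_{L^2(\Omega)}$ from (A7), and then controls the localization by energy minimization (C\'ea) in $\Vms$ with the comparison element $u_{H,\Omega}+Q_h(u_{H,\Omega})$, so that (A8) together with the stability bound $||| u_{H,\Omega}+Q_h^{\Omega}(u_{H,\Omega}) |||_h\lesssim\|f\|_{L^2(\Omega)}$ produces the $(1/H)^{p}k^{d/2}\theta^{k}$ term --- which is your Strang/C\'ea perturbation step in slightly different packaging. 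For the $L^2$-part the paper only remarks that it follows ``in a similar fashion'' from (\ref{L2-norm-smaller-than-energy}), and your more explicit sketch (identifying the coarse part via $Q_h(\Phi_H)\in\kernel(I_H)$ and applying (A7) to the corrector) matches that intent, with the one caveat that the bound $||| Q_h(\Phi_H) |||_h\lesssim\|f\|_{L^2(\Omega)}$ implicitly requires fine-norm control of the coarse part, which is immediate in the continuous Galerkin setting ($|||\cdot|||_h=|||\cdot|||_H$) and obtainable abstractly by writing $\Phi_H+Q_h^{\Omega}(\Phi_H)=(1-P_h)(u_H^{\text{\tiny G-LOD}})$ and invoking the $|||\cdot|||_h$-stability of the orthogonal projection together with Lemma~\ref{lemma-1}.
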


The term
$((I_H\vert_{V_H})^{-1} \circ I_H)(u_H^{\text{\tiny {G-LOD}}})$ describes the coarse part (resulting from $V_H$) of $u_H^{\text{\tiny {G-LOD}}}$ and
thus is numerically homogenized (the oscillations are averaged out). In this sense, we can say that $u_H^{\text{\tiny {G-LOD}}}$ is an $H^1$-approximation of $u_h$ and $((I_H\vert_{V_H})^{-1} \circ I_H)(u_H^{\text{\tiny {G-LOD}}})$ an $L^2$-approximation of $u_h$, respectively.
Furthermore, because
$k^{\frac{d}{2}} \theta^{k}$ converges with exponential order to zero, the error $||| u_h - u_H^{\text{\tiny G-LOD}} |||_h$ is typically dominated by the first term of order O$(H)$. This was observed in various numerical experiments in different works, c.f. \cite{Henning:Malqvist:2013,Henning:Malqvist:Peterseim:2013,Malqvist:Peterseim:2011}. In particular, a specific choice $k \gtrsim (p+1) |\log(H)|$ leads to a O$(H)$ convergence for the total $H^1$-error, see also \cite{Henning:Malqvist:2013,Henning:Malqvist:Peterseim:2013,Malqvist:Peterseim:2011}.

\subsection{Petrov-Galerkin LOD}
\label{subsection-pg-lod}

In a straightforward manner, we can now state the LOD in Petrov-Galerkin formulation:
find $u_H^{\text{\tiny PG-LOD}} \in \Vms$ that satisfies
\begin{eqnarray}
\label{lod-problem-eq-pg} a_h(u_H^{\text{\tiny PG-LOD}},\Phi_H ) = ( f , \Phi_H )
\hspace*{0.3cm} \text{for all} \hspace*{0.3cm} \Phi_H \in V_H.
\end{eqnarray}

A unique solution of (\ref{lod-problem-eq-pg}) is guaranteed by the inf-sup stability. In practice, inf-sup stability is clearly observable in numerical experiments (see Section \ref{section:numerical:experiments}). Analytically we can make the following observations.

\begin{remark}[Quasi-orthogonality and inf-sup stability]\label{quasi-orth}
The inf-sup stability of the LOD in Petrov-Galerkin formulation is a natural property to expect, since we have quasi-orthogonality in $a_h(\cdot,\cdot)$ of the spaces $\Vms$ and $W_h$. This can be verified by a simple computation. Let $\Phi^{\operatorname*{ms}}=\Phi_H + Q_h(\Phi_H) \in \Vms$, let $w_h \in W_h$ and let $Q_h^{\Omega}(\Phi_H)$ the optimal corrector as in assumption (A8), then
\begin{equation*}
\begin{aligned}
a_h ( \Phi^{\operatorname*{ms}} , w_h ) &= a_h ( \Phi_H + Q_h(\Phi_H) , w_h )\\
&= a_h ( Q_h(\Phi_H) - Q_h^{\Omega}(\Phi_H) , w_h ) \\
&\le ||| Q_h(\Phi_H) - Q_h^{\Omega}(\Phi_H) |||_h ||| w_h |||_h \\
&\lesssim k^{d/2} \theta^{k} (1/H)^{p} ||| \Phi_H + Q_h^{\Omega}(\Phi_H) |||_h ||| w_h |||_h,
\end{aligned}
\end{equation*}
with generic constants $0<\theta<1$ and $p\in \{ 0,1\}$ as in (A8). This means that $a_h ( \Phi^{\operatorname*{ms}} , w_h )$ converges exponentially in $k$ to zero, and it is identical to zero for all sufficiently large $k$ (because then $Q_h(\Phi_H)=Q_h^{\Omega}(\Phi_H)$). Writing the PG-LOD bilinear form as
\begin{eqnarray*}
\lefteqn{a_h( \Phi_H + Q_h(\Phi_H), \Psi_H )}\\
&=& a_h ( \Phi_H + Q_h(\Phi_H), \Psi_H + Q_h(\Psi_H) ) + a_h ( \Phi_H + Q_h(\Phi_H), Q_h(\Psi_H) ),
\end{eqnarray*}
we see that it is only a small perturbation of the symmetric (coercive) G-LOD version, where the difference can be bounded by the quasi-orthogonality.
\end{remark}

Even though the quasi-orthogonality {\it suggests} inf-sup stability, the given assumptions (A1)-(A8) do not seem to be sufficient for rigorously proving it. Here, it seems necessary to leave the abstract setting and to prove the inf-sup stability result for the various LOD realizations separately. For simplification, we therefore make the inf-sup stability to be an additional assumption (see (A9) below). Later we give an example how to prove this assumption for a certain realization of the method. We also note that the inf-sup stability can be always verified numerically (for a given $k$) by investigating the system matrix $S^{\text{\tiny PG-LOD}}$ given by the entries
\begin{align*}
(S^{\text{\tiny PG-LOD}})_{ij} = a_h( \Phi_j + Q_h(\Phi_j) , \Phi_i )
\end{align*}
for $1\le i,j \le N_H$ where $N_H$ denotes the dimension of $V_H$ and where $\{ \Phi_i| \hspace{2pt} 1 \le i \le N_H \}$ denotes a basis of $V_H$. To check the inf-sup stability we must compute the eigenvalues of $S^{\text{\tiny PG-LOD}}$. If their real parts are all strictly positive, we have inf-sup stability and the inf-sup constant is identical to the smallest real part of an eigenvalue. Standard approaches for computing the eigenvalues of a non-symmetric matrix are the Arnoldi method, the Jacobi-Davidson method and the non-symmetric Lanczos algorithm (cf. \cite{MR1893417} for a comprehensive overview). Since $N_H$ is moderately small, the cost for applying one of the methods are still feasible.

\begin{itemize}
\item[(A9)] We assume that the LOD in Petrov-Galerkin formulation is inf-sup stable in the following sense:
there exists a sequence of constants $\alpha(k)$ and a generic limit $\alpha_0>0$ (independent of $H$, $h$, $k$ or the oscillations of $A$) such that $\alpha(k)$ converges with {\it exponential speed} to $\alpha_0$, i.e. there exist constants $C(H)$ (possibly depending on $H$, but not on $h$, $k$ or the oscillations of $A$) and a generic $\theta \in (0,1)$ such that
$|\alpha(k)-\alpha_0| \le C(H) k^{d/2} \theta^k$. Furthermore it holds $\alpha(\bar{k})=\alpha_0$ for all sufficiently large $\bar{k}$ and 
\begin{align*}
\frac{a_h(\Phi^{\operatorname*{ms}},\Phi_H)}{||| \Phi_H |||_H} \ge \alpha(k) ||| \Phi^{\operatorname*{ms}} |||_h,
\end{align*}
for all $\Phi^{\operatorname*{ms}}\in \Vms$ and $\Phi_H:=((I_H\vert_{V_H})^{-1} \circ I_H)(\Phi^{\operatorname*{ms}})\in V_H$.
\end{itemize}

The following result states that the approximation quality of the LOD in Petrov-Galerkin formulation is of the same order as for the Galerkin LOD, up to a possible pollution term depending on $C_{H,h}$, but which still converges exponentially to zero.

\begin{theorem}[A priori error estimate for PG-LOD]\label{t:a-priori-local-pg}$\\$
Assume (A1)-(A9). Given a positive $k\in\mathbb{N}_{>0}$, let for all $T \in\T_H$ the patch $U(T)=U_k(T)$ be defined as in (\ref{def-patch-U-k}) and large enough so that the inf-sup constant in (A9) fulfills $\alpha(k)\ge \bar{\alpha}$ for some $\bar{\alpha}>0$ and let
$u_H^{\text{\tiny \em{PG-LOD}}}$ be the unique solution of (\ref{lod-problem-eq-pg}). 
 Let $u_h \in V_h$ be the fine scale reference solution governed by (\ref{fine-grid}).
  Then, the following a priori error estimate holds true
\begin{eqnarray*}
\lefteqn{\| u_h - ((I_H\vert_{V_H})^{-1} \circ I_H)(u_H^{\text{\tiny \em{PG-LOD}}}) \|_{L^2(\Omega)} + ||| u_h - u_H^{\text{\tiny \em{PG-LOD}}} |||_h} \\
&\lesssim& (H + (1/H)^{p}(1 + (1/\bar{\alpha}))(1+C_{H,h}) k^{d/2} \theta^{k} ) \|f\|_{L^2(\Omega)},
\end{eqnarray*}
where $0<\theta<1$ and $p \in \{0,1\}$ are the generic constants from assumption (A8) and $C_{H,h}$ as in (A5).
\end{theorem}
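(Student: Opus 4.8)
The plan is to compare $u_H^{\text{\tiny PG-LOD}}$ with the Galerkin solution $u_H^{\text{\tiny G-LOD}}$ of \eqref{lod-problem-eq}, which already satisfies the asserted bound by Theorem~\ref{t:a-priori-local}. By the triangle inequality it then suffices to control the difference $e:=u_H^{\text{\tiny G-LOD}}-u_H^{\text{\tiny PG-LOD}}\in\Vms$ in $|||\cdot|||_h$ and its coarse part in $L^2(\Omega)$; these two estimates combine with Theorem~\ref{t:a-priori-local} to give the asserted bound, the $O(H)$ contributions merging into the leading $H$-term and the exponentially small ones forming the stated pollution term.

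Because $e\in\Vms$, it can be written as $e=\Psi_H+Q_h(\Psi_H)$ with coarse part $\Psi_H:=((I_H\vert_{V_H})^{-1}\circ I_H)(e)\in V_H$. The first move is to invoke the inf-sup stability (A9): since $k$ is large enough that $\alpha(k)\ge\bar\alpha$, we have $\bar\alpha|||e|||_h\le a_h(e,\Psi_H)/|||\Psi_H|||_H$, so everything reduces to estimating the residual $a_h(e,\Psi_H)$. Testing \eqref{lod-problem-eq-pg} with $\Phi_H=\Psi_H$ gives $a_h(u_H^{\text{\tiny PG-LOD}},\Psi_H)=(f,\Psi_H)$, while testing \eqref{lod-problem-eq} with $\Phi^{\operatorname*{ms}}=\Psi_H+Q_h(\Psi_H)\in\Vms$ gives $a_h(u_H^{\text{\tiny G-LOD}},\Psi_H)=(f,\Psi_H)+(f,Q_h(\Psi_H))-a_h(u_H^{\text{\tiny G-LOD}},Q_h(\Psi_H))$. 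Subtracting, the $(f,\Psi_H)$-terms cancel and I obtain the key identity
\[
a_h(e,\Psi_H)=(f,Q_h(\Psi_H))-a_h(u_H^{\text{\tiny G-LOD}},Q_h(\Psi_H)),
\]
in which $Q_h(\Psi_H)\in W_h$ and $u_H^{\text{\tiny G-LOD}}\in\Vms$.

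The two terms are bounded by different mechanisms. The second is precisely of the quasi-orthogonality type analysed in Remark~\ref{quasi-orth}: by (A8) it is $\lesssim k^{d/2}\theta^k(1/H)^p\,|||u_H^{\text{\tiny G-LOD}}|||_h\,|||Q_h(\Psi_H)|||_h$, and the stability bound $|||u_H^{\text{\tiny G-LOD}}|||_h\lesssim\|f\|_{L^2(\Omega)}$ turns it into the exponentially small contribution carrying $(1/H)^p k^{d/2}\theta^k$. The first term uses $I_H(Q_h(\Psi_H))=0$ together with (A7), giving $\|Q_h(\Psi_H)\|_{L^2(\Omega)}\le C_{I_H}H|||Q_h(\Psi_H)|||_h$ and hence $(f,Q_h(\Psi_H))\le C_{I_H}H\|f\|_{L^2(\Omega)}|||Q_h(\Psi_H)|||_h$, an $O(H)$ contribution. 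In both, Lax--Milgram applied to the corrector problem \eqref{local-corrector-problem} together with (A5) bounds $|||Q_h(\Psi_H)|||_h$, and the norm relation of (A5) is used to pass between $|||\cdot|||_h$ and $|||\cdot|||_H$; this is what introduces the factor $(1+C_{H,h})$. Dividing by $\bar\alpha|||\Psi_H|||_H$ then yields the claimed bound on $|||e|||_h$. For the $L^2$-part, $((I_H\vert_{V_H})^{-1}\circ I_H)(e)=\Psi_H$ and (A7) gives $\|\Psi_H\|_{L^2(\Omega)}\lesssim|||\Psi_H|||_H\lesssim|||e|||_h$, so it inherits the same estimate.

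The main obstacle is the bookkeeping in this last step rather than any single inequality. The residual splits into a genuinely $O(H)$ piece and an exponentially small piece which share the factor $|||Q_h(\Psi_H)|||_h/|||\Psi_H|||_H$; to reproduce the stated bound one must organise the norm interplay through (A5) and corrector stability so that the possibly degenerating constant $C_{H,h}$ decorates only the exponentially decaying term, while the leading $H$-term stays clean and $1/\bar\alpha$ enters merely as a prefactor. Controlling the ratio $|||Q_h(\Psi_H)|||_h/|||\Psi_H|||_H$ correctly — keeping track of which of the two norms $|||\cdot|||_h$ and $|||\cdot|||_H$ each constant attaches to, and ensuring the constants do not degrade the sharp $O(H)$ rate — is the delicate point of the argument.
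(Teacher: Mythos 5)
Your reduction to the localized Galerkin solution is correct as far as it goes: the identity
\[
a_h(e,\Psi_H)=(f,Q_h(\Psi_H))-a_h(u_H^{\text{\tiny G-LOD}},Q_h(\Psi_H)),\qquad e=\Psi_H+Q_h(\Psi_H)\in\Vms,
\]
is valid, the second term is indeed of quasi-orthogonality type and exponentially small, and $(f,Q_h(\Psi_H))\lesssim H\|f\|_{L^2(\Omega)}\,|||Q_h(\Psi_H)|||_h$ follows from $I_H(Q_h(\Psi_H))=0$ and (A7). The genuine gap is the step you yourself flag as ``the delicate point'' and then leave unresolved: after dividing the inf-sup inequality by $\bar{\alpha}\,|||\Psi_H|||_H$, \emph{both} residual pieces carry the ratio $|||Q_h(\Psi_H)|||_h/|||\Psi_H|||_H$, and within (A1)--(A9) this ratio is only controlled via $|||Q_h(\Psi_H)|||_h\lesssim|||\Psi_H|||_h\lesssim C_{H,h}\,|||\Psi_H|||_H$ (the projection property of $Q_h^{\Omega}=-P_h$ plus (A8), then the $h$-versus-$H$ norm comparison on $V_H$ --- the same place where $C_{H,h}$ enters the paper's own proof). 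Your argument therefore yields
\[
|||e|||_h\lesssim\frac{C_{H,h}}{\bar{\alpha}}\bigl(H+(1/H)^{p}k^{d/2}\theta^{k}\bigr)\|f\|_{L^2(\Omega)},
\]
in which the possibly degenerating factor $C_{H,h}/\bar{\alpha}$ multiplies the $O(H)$ term as well. That is strictly weaker than the theorem, whose structural content is that the $H$-term is clean and $(1+1/\bar{\alpha})(1+C_{H,h})$ decorates only the exponentially decaying term: for the DG realization $C_{H,h}=\sqrt{H/h}\to\infty$ as $h\to 0$, so the discrepancy is real, not cosmetic. No rearrangement of your two-term splitting fixes this, because any $O(H)$ consistency contribution inside an inf-sup residual tested with $\Psi_H$ is necessarily paired against a corrector of $\Psi_H$, whose $|||\cdot|||_h$-norm relates to $|||\Psi_H|||_H$ only through $C_{H,h}$.

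The paper circumvents exactly this by a three-term decomposition through the \emph{ideal} ($U(T)=\Omega$) solutions: $u_h\to u_{H,\Omega}^{\text{\tiny G-LOD}}\to u_{H,\Omega}^{\text{\tiny PG-LOD}}\to u_{H}^{\text{\tiny PG-LOD}}$. The first two differences are $O(H)$ with clean constants because in $\Vms_{\Omega}$ the orthogonality to $W_h$ is exact: the first step uses Galerkin orthogonality ($u_h-u_{H,\Omega}^{\text{\tiny G-LOD}}\in W_h$), and the PG-versus-G step uses $a_h(u_{H,\Omega}^{\text{\tiny PG-LOD}},e^{(2)}_f)=0$ together with Lemma~\ref{lemma-1}, so neither inf-sup nor $C_{H,h}$ appears there. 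All degenerate constants are confined to the pure localization error $|||u_{H,\Omega}^{\text{\tiny PG-LOD}}-u_{H}^{\text{\tiny PG-LOD}}|||_h$, which the paper bounds not by inf-sup testing of a residual but by an energy-minimization argument (the coarse part of the localized PG solution minimizes a quadratic functional $J$), with (A8), a Young-inequality absorption, and (A9) entering only through the stability bound $|||u_{H}^{\text{\tiny PG-LOD}}|||_h\lesssim(1/\bar{\alpha})\|f\|_{L^2(\Omega)}$. To repair your proof you would have to route the $O(H)$ part through the ideal spaces in this way; as written, the proposal proves a weaker estimate. A minor additional point: applying the quasi-orthogonality of Remark~\ref{quasi-orth} to $u_H^{\text{\tiny G-LOD}}$ bounds the residual by $|||u_H+Q_h^{\Omega}(u_H)|||_h$, not directly by $|||u_H^{\text{\tiny G-LOD}}|||_h$; passing between the two requires (A8) and a case distinction for small $k$ (where the claimed bound is anyway trivial by stability), which should be made explicit.
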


\subsection{Example 1: Continuous Galerkin Finite Element Method}
\label{subsection-pg-lod-example1}

The previous subsection showed that the Petrov-Galerkin formulation of the LOD does not suffer from a loss in accuracy with respect to the symmetric formulation. In this subsection, we give the specific example of the LOD for the Continuous Galerkin Finite Element Method. In particular, we discuss the advantage of the PG formulation over the symmetric formulation. Let us first introduce the specific setting and the corresponding argument about the validity of (A4)-(A9) on this setting.

In addition to the assumptions that we made on the shape regular partitions $\T_H$ and $\T_h$ in Section \ref{subsection:two-grid:disc}, we assume that $\T_H$ and $\T_h$ are either triangular or quadrilateral meshes. Accordingly, for $\T=\T_H,\T_h$ we denote
\begin{align*}
P_1(\T) &:= \{v \in C^0(\Omega) \;\vert \;\forall T\in\T,v\vert_T \text{ is a polynomial of total degree}\leq 1\} \quad \mbox{and}\\
Q_1(\T) &:= \{v \in C^0(\Omega) \;\vert \;\forall T\in\T,v\vert_T \text{ is a polynomial of partial degree}\leq 1\}
\end{align*}
and define
$V_h:=P_1(\T_h)\cap H^1_0(\Omega)$
if $\T_h$ is a triangulation and
$V_h:=Q_1(\T_h)\cap H^1_0(\Omega)$
if it is a quadrilation. The coarse space $V_H \subset V_h$ is defined in the same fashion and since $\T_h$ is a refinement of $\T_H$, assumption (A6) is obviously fulfilled. For simplicity, we also assume that the coarse mesh $\T_H$ is quasi-uniform (which is the typical choice in applications). 

The bilinear form $a_h(\cdot,\cdot)$ is defined by the standard energy scalar product on $H^1_0(\Omega)$ that belongs to the elliptic problem to solve, i.e.
\begin{align*}
a_h(v,w):= \int_{\Omega} A \nabla v \cdot \nabla w \qquad \mbox{for } v,w \in H^1_0(\Omega).
\end{align*}
Accordingly, we set $||| v |||_h:=||| v |||_H:=\| A^{1/2} \nabla v \|_{L^2(\Omega)}$ for $v \in H^1(\Omega)$. Hence, assumptions (A5) and (A6) are fulfilled and the solution $u_h \in V_h$ of (\ref{fine-grid}) is nothing but the standard continuous Galerkin Finite Element solution on the fine grid $\T_h$.

Next, we specify $I_H:V_h \rightarrow W_h$ in (A7).
For this purpose, let $\Phi_z \in V_H$ be the nodal basis function associated with the coarse grid node $z \in \mathcal{N}_H$, i.e.,
 $\Phi_z(y)=\delta_{yz}$. Let $I_H$ be the weighted Cl\'ement-type quasi-interpolation operator as defined in \cite{MR1736895,MR1706735}:
\begin{align}
\label{def-weighted-clement} I_H : H^1_0(\Omega) \rightarrow V_H,\quad v\mapsto I_H(v):= 
\sum_{z \in \mathcal{N}_{H}^0}
v_z \Phi_z \quad \text{with }v_z := \frac{(v,\Phi_z)_{L^2(\Omega)}}{(1,\Phi_z)_{L^2(\Omega)}}.
\end{align}
First we note that it was shown in \cite{Malqvist:Peterseim:2011} that $(I_H)_{|V_H}: V_H \rightarrow V_H$ is an isomorphism (but not a projection, i.e. $(I_H\vert_{V_H})^{-1} \neq I_H\vert_{V_H}$). Hence, $(I_H)_{|V_H}^{-1}$ exists.
This is one of the properties in (A7). The $L^2$- and $H^1$-stability of $I_H$, as well as corresponding approximation properties, were proved in \cite{MR1736895}. It only remains to check the $H^1$-stability of $(I_H)_{|V_H}^{-1}$. Unfortunately, this property is not trivial to fulfill. First, we note that it was shown in \cite{Malqvist:Peterseim:2012} that the mapping $(I_H)_{|V_H}^{-1} \circ I_H$ is nothing but the $L^2$-projection $P_{L^2} :H^1_0(\Omega) \rightarrow V_H$ (see also Remark \ref{remark-on-L2-projection} below). Consequently, the question of $H^1$-stability of $(I_H)_{|V_H}^{-1}$ is equivalent to the question of $H^1$-stability of the $L^2$-projection. This result is well-established for quasi uniform grids (cf. \cite{BaD81}) as assumed at the beginning of this section. However it is still open for arbitrary refinements. The most recent results on this issue can be found in \cite{BaY14,KPP13,GHS14}, where the desired $H^1$-stability was shown for certain types of adaptively refined meshes. To avoid complicated mesh assumptions in this paper, we simply assume $\T_H$ to be quasi-uniform. This is not very restrictive since adaptive refinements should typically take place on the fine mesh $\T_h$. Alternatively, in light of \cite{BaY14,KPP13,GHS14}, we could also directly assume that the $L^2$-projection on $V_H$ is $H^1$-stable to allow more general coarse meshes.

It remains to specify $a_h^T(\cdot,\cdot)$, which we define by
\begin{align*}
a_h^T(v,w):= \int_T A \nabla v \cdot \nabla w \qquad \mbox{for } v,w \in H^1_0(\Omega).
\end{align*}
Let us for simplicity denote $|||\cdot |||_{h,T}:=\| A^{1/2} \nabla \cdot \|_{L^2(T)}$. The decay assumption (A8) was essentially proved in \cite[Lemma 3.6]{Henning:Malqvist:2013}, which established the existence of a generic constant $0<\theta<1$ with the properties as in (A8) such that
\begin{eqnarray}
\label{equation-influence-intersections-cg}||| (Q_h - Q_h^{\Omega})(\Phi_H) |||_h^2 \lesssim k^d
\theta^{2 k}
\sum_{T\in\T_H} ||| Q_h^{\Omega,T}(\Phi_H) |||_h^2,
\end{eqnarray}
for all $\Phi_H \in V_H$ .
On the other hand we have by $|||\cdot |||_{h,T}=\| A^{1/2} \nabla \cdot \|_{L^2(T)}$ and equation (\ref{local-corrector-problem}) that
\begin{equation}
\label{local-energy}
\begin{aligned}
||| Q_h^{\Omega,T}(\Phi_H) |||_h^2 &\lesssim a_h( Q_h^{\Omega,T}(\Phi_H) , Q_h^{\Omega,T}(\Phi_H) ) \\
&= - a_h^T ( \Phi_H , Q_h^{\Omega,T}(\Phi_H) )\\
&\lesssim ||| \Phi_H |||_{h,T} ||| Q_h^{\Omega,T}(\Phi_H) |||_h.
\end{aligned}
\end{equation}
Hence, by plugging this result into \eqref{equation-influence-intersections-cg}:
\begin{eqnarray*}
||| (Q_h - Q_h^{\Omega})(\Phi_H) |||_h^2 &\lesssim& k^d \theta^{2 k} \sum_{T\in\T_H} ||| \Phi_H |||_{h,T}^2 \\
&\lesssim&
k^d \theta^{2 k} ||| \Phi_H |||_h^2 = k^d \theta^{2 k} ||| ((I_H\vert_{V_H})^{-1} \circ I_H) (\Phi_H+Q_h^{\Omega}(\Phi_H)) |||_h^2 \\
&\overset{(A7)}{\lesssim} & k^d \theta^{2 k} ||| \Phi_H+Q_h^{\Omega}(\Phi_H) |||_h^2,
\end{eqnarray*}
which proves that assumption (A8) holds even with $p=0$. The remaining assumption (A9) is less obvious and requires a proof.  We give this proof 
for the Continuous Galerkin PG-LOD in Section \ref{section:proofs:pg:lod}. We summarize the result in the following lemma.

\begin{lemma}[inf-sup stability of Continuous Galerkin PG-LOD]
\label{inf-sup-stability-pg-lod}
For all $T \in \T_H$ let $U(T)=U_{k}(T)$ for $k \in \mathbb{N}$. Then there exist generic constants $C_1,C_2$ (independent of $H$, $h$, $k$ or the oscillations of $A$) and $0 < \theta < 1$ as in assumption (A8), so that it holds
\begin{align*}
\inf_{\Phi_H \in V_H} \sup_{\Phi^{\operatorname*{ms}} \in \Vms} \frac{a(\Phi^{\operatorname*{ms}},\Phi_H)}{|||  \Phi^{\operatorname*{ms}} |||_h ||| \Phi_H |||_h} \ge \alpha(k),
\end{align*}
for $\alpha(k) := C_1 \alpha - C_2 k \theta^{k} \omega(\Phi^{\operatorname*{ms}})$ and
\begin{align*}
0 \le \omega(\Phi^{\operatorname*{ms}}) := \inf_{w_h \in W_h^T} \frac{\| \nabla \Phi^{\operatorname*{ms}} - \nabla ((I_H\vert_{V_H})^{-1} \circ I_H)( \Phi^{\operatorname*{ms}} ) - \nabla w_h \|}{\|  \nabla \Phi^{\operatorname*{ms}} - \nabla ((I_H\vert_{V_H})^{-1} \circ I_H)( \Phi^{\operatorname*{ms}} ) \|} \le 1,
\end{align*}
where $W_h^T:= \{ w_h \in W_h | \hspace{2pt} w_h\vert_{T} \in W_h(T) \}$, i.e. the space of all functions from $W_h$ that are zero on the boundary of the coarse grid elements. Observe that $\alpha(k)$ converges with exponential speed to $\alpha C_1$. Furthermore we have $\alpha(0)=C_1 \alpha$ (because $\omega(\Phi^{\operatorname*{ms}})=0$) and also $\alpha(\ell)=C_1 \alpha$ for all sufficiently large $\ell$.
\end{lemma}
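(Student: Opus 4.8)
The plan is to establish the (transposed) inf-sup condition by exhibiting, for every fixed $\Phi_H \in V_H$, an explicit element of $\Vms$ that realizes the supremum up to a generic constant. The natural candidate is the multiscale function $\Phi^{\operatorname*{ms}} := \Phi_H + Q_h(\Phi_H)$, whose coarse part is exactly $\Phi_H$, i.e.\ $\Phi_H = ((I_H\vert_{V_H})^{-1}\circ I_H)(\Phi^{\operatorname*{ms}})$. Since the stability estimates in (A7) give $|||\Phi_H|||_h \lesssim |||\Phi^{\operatorname*{ms}}|||_h$ (and conversely), it suffices to bound $a(\Phi^{\operatorname*{ms}},\Phi_H)$ from below by a multiple of $|||\Phi^{\operatorname*{ms}}|||_h^2$; dividing by $|||\Phi^{\operatorname*{ms}}|||_h\,|||\Phi_H|||_h$ and invoking the norm equivalence then produces the claimed lower bound for the inf-sup quotient.

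First I would split off the coercive main term. Writing $\Phi_H = \Phi^{\operatorname*{ms}} - Q_h(\Phi_H)$ and using the symmetry of $a(\cdot,\cdot)$ gives $a(\Phi^{\operatorname*{ms}},\Phi_H) = |||\Phi^{\operatorname*{ms}}|||_h^2 - a(\Phi^{\operatorname*{ms}}, Q_h(\Phi_H))$, where in this continuous Galerkin setting $a = a_h$ and $|||\cdot|||_h^2 = a_h(\cdot,\cdot)$, so the leading term carries the coercivity constant $\alpha$ from (A5). Everything then reduces to showing that the defect $a(\Phi^{\operatorname*{ms}}, Q_h(\Phi_H))$ is exponentially small in $k$ and, moreover, carries the factor $\omega(\Phi^{\operatorname*{ms}})$. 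Because $Q_h(\Phi_H)\in W_h$, I would invoke the quasi-orthogonality of Remark~\ref{quasi-orth}: using the $a_h$-orthogonal splitting $V_h = \Vms_{\Omega}\oplus W_h$, one may replace $\Phi^{\operatorname*{ms}}$ by $\Phi^{\operatorname*{ms}} - (\Phi_H + Q_h^{\Omega}(\Phi_H)) = Q_h(\Phi_H) - Q_h^{\Omega}(\Phi_H)$, which exposes the difference $Q_h - Q_h^{\Omega}$ controlled by the decay estimate (A8)/\eqref{equation-influence-intersections-cg}.

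The crucial refinement, which produces the factor $\omega(\Phi^{\operatorname*{ms}})$, is to test against the best element-local approximation of $Q_h(\Phi_H)$ in $W_h^T$. For any $w_h \in W_h^T$ one has the elementwise orthogonality identity $\int_T A\nabla(\Phi_H + Q_h^T(\Phi_H))\cdot\nabla w_h = 0$; this follows directly from the defining corrector equation \eqref{local-corrector-problem} tested with the element-local function $w_h\vert_T \in W_h(T)$, which is admissible since $T \subseteq U(T)$. Consequently $a(\Phi^{\operatorname*{ms}}, w_h)$ collapses to a sum of contributions of the \emph{foreign} correctors $Q_h^{T'}$, $T'\neq T$, evaluated on $T$ — precisely the exponentially decaying tails quantified by (A8). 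Decomposing $Q_h(\Phi_H) = w_h^\ast + (Q_h(\Phi_H) - w_h^\ast)$ with $w_h^\ast \in W_h^T$ the minimizer defining $\omega(\Phi^{\operatorname*{ms}})$, the remainder has relative $|||\cdot|||_h$-size $\omega(\Phi^{\operatorname*{ms}})$ by construction, and combining the two pieces with Cauchy--Schwarz and (A8) should yield a defect bound of the form $C_2\,k\,\theta^k\,\omega(\Phi^{\operatorname*{ms}})\,|||\Phi^{\operatorname*{ms}}|||_h^2$.

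Collecting the estimates gives $a(\Phi^{\operatorname*{ms}},\Phi_H) \ge (C_1\alpha - C_2 k\theta^k\omega(\Phi^{\operatorname*{ms}}))\,|||\Phi^{\operatorname*{ms}}|||_h^2$, and dividing through yields the stated inf-sup bound with $\alpha(k)$ as defined. The endpoint behaviour is then read off directly: at $k=0$ the patches are single elements, $Q_h(\Phi_H)$ is already element-local, hence $\omega(\Phi^{\operatorname*{ms}})=0$ and $\alpha(0)=C_1\alpha$; for all sufficiently large $\ell$ the localized correctors coincide with the global ones and the foreign-tail contribution vanishes, again giving $\alpha(\ell)=C_1\alpha$. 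I expect the main obstacle to be exactly the refined elementwise decay analysis of the third step: one must carefully separate the part of $Q_h(\Phi_H)$ handled \emph{exactly} by the local corrector orthogonality from the cross-boundary part, and sum the foreign-corrector tails over the $O(k^d)$ overlapping patches in a way that retains both the $\omega$-weighting and the sharp power of $k$, rather than the cruder $k^{d/2}\theta^k$ estimate that follows from applying (A8) verbatim.
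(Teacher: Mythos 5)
Your proposal is correct and follows essentially the same route as the paper's own proof: it pairs $\Phi^{\operatorname*{ms}}=\Phi_H+Q_h(\Phi_H)$ with its coarse part $\Phi_H=((I_H\vert_{V_H})^{-1}\circ I_H)(\Phi^{\operatorname*{ms}})$, isolates the coercive term via $a_h(\Phi^{\operatorname*{ms}},\Phi_H)=a_h(\Phi^{\operatorname*{ms}},\Phi^{\operatorname*{ms}})-a_h(\Phi^{\operatorname*{ms}},Q_h(\Phi_H))$, and controls the defect exactly as the paper does, by combining the global quasi-orthogonality (replacing $\Phi^{\operatorname*{ms}}$ by $(Q_h-Q_h^{\Omega})(\Phi_H)$, cf.\ (\ref{lemma-orthogonality-opt-corr})) with the element-local corrector orthogonality against $W_h^T$ (cf.\ (\ref{lemma-orthogonality-loc-corr})), so that Cauchy--Schwarz plus (\ref{equation-influence-intersections-cg}) yields the product of the exponential decay factor and $\omega(\Phi^{\operatorname*{ms}})$, with the same endpoint observations for $k=0$ and large $k$. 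Your closing worry about needing a refined foreign-tail summation to beat the ``crude'' $k^{d/2}\theta^{k}$ bound is unnecessary: the paper itself applies (A8) verbatim and ends with $k^{d/2}\theta^{k}$ in (\ref{final-estimate}), the factor $k\theta^{k}$ in the lemma statement being a minor cosmetic discrepancy of the paper rather than something your argument must sharpen.
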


\begin{remark}
Let $U(T)=U_{k}(T)$ for $k \in \mathbb{N}$ with $k \gtrsim |\log(H)|$, then the CG-LOD in Petrov-Galerkin formulation is inf-sup stable for sufficiently small $H$. In particular, there exists a unique solution of problem (\ref{lod-problem-eq-pg}).
\end{remark}

\begin{remark}
Lemma \ref{inf-sup-stability-pg-lod} does not allow to conclude to inf-sup stability for the regime $0<k\ll|\log(H)|$. However, even though this regime is not of practical relevance, it is interesting to note that we could not observe a violation of the inf-sup stability for any value of $k$ and in any numerical experiment that we set up so far.
\end{remark}

Since assumptions (A1)-(A9) are fulfilled for this setting, Theorems \ref{t:a-priori-local} and \ref{t:a-priori-local-pg} hold true for the arising method. Furthermore, we have $p=0$ and $C_{H,h}=1$ in the estimates, meaning that the $(1/H)$-pollution in front of the decay term vanishes. We can summarize the result in the following conclusion.

\begin{conclusion}
Assume the (Continuous Galerkin) setting of this subsection and let $u_H^{\text{\tiny \em{PG-LOD}}}$ denote a Petrov-Galerkin solution of (\ref{lod-problem-eq-pg}). If $k \gtrsim m H |\log(H)|$ for $m \in \mathbb{N}$, then it holds
\begin{eqnarray*}
\| u_h - u_H^{\text{\tiny \em{PG-LOD}}} \|_{H^1(\Omega)} &\lesssim& (H + H^m ) \|f\|_{L^2(\Omega)}.
\end{eqnarray*}
In particular, the bound is independent of $C_{H,h}$.
\end{conclusion}

\subsection{Discussion of advantages}
The central disadvantage of the Galerkin LOD is that it requires a communication between solutions of different patches. Consider for instance the assembly of the system matrix that belongs to problem (\ref{lod-problem-eq}). Here it is necessary to compute entries of the type
\begin{align*}
\int_{\Omega} A \nabla (\Phi_i + Q_h(\Phi_i) ) \cdot \nabla (\Phi_j + Q_h(\Phi_j) ),
\end{align*}
which particularly involves the computation of the term
\begin{align}
\label{problematic-term}\underset{T \subset \omega_i}{\sum_{T \in \T_H}} \underset{K \subset \omega_j}{\sum_{K \in \T_H}} 
\int_{U(T) \cap U(K)} A \nabla Q_h^T(\Phi_i) \cdot \nabla Q_h^K(\Phi_j),
\end{align}
where $\Phi_i,\Phi_j \in V_H$ denote two coarse nodal basis functions and $\omega_i$ and $\omega_j$ its corresponding supports. The efficient computation of (\ref{problematic-term}) requires information about the intersection area of any two patches $U(T)$ and $U(K)$. Even if $T$ and $K$ are not adjacent or close to each other, the intersection of the corresponding patches can be complicated and non-empty. The drawback becomes obvious: first, these intersection areas must be determined, stored and handled in an efficient way and second, the number of relevant entries of the stiffness matrix (i.e. the non-zeros) increases considerably. Note that this also leads to a restriction in the parallelization capabilities, in the sense that the assembly of the stiffness matrix can only be 'started' if the correctors $Q_h(\Phi_i)$ are already computed. Another disadvantage is that the assembly of the right hand side vector associated with $(f,\Phi^{\operatorname*{ms}})$ in (\ref{lod-problem-eq}) is much more expensive since it involves the computation of entries $(f,\Phi_i+Q_h(\Phi_i))_{L^2(\Omega)}$. First, the integration area is $\cup 
\{ U(T) | \hspace{2pt} T \in \T_H, \hspace{2pt} T \subset \omega_i\}$ instead of typically $\omega_i$. This increases the computational costs. At the same time, it is also hard to assemble these entries by performing (typically more efficient) element-wise computations (for which each coarse element has to be visited only once). Second, $(f,\Phi_i+Q_h(\Phi_i))_{L^2(\Omega)}$ involves a quadrature rule of high order, since $Q_h(\Phi_i)$ is rapidly oscillating. These oscillations must be resolved by the quadrature rule, even if $f$ is a purely macroscopic function that can be handled exactly by a low order quadrature. Hence, the costs for computing $(f,\Phi_i+Q_h(\Phi_i))_{L^2(\Omega)}$ depend indirectly on the oscillations of $A$. Finally, if the LOD shall be applied to a sequence of problems of type (\ref{exact-solution}), which only differ in the source term $f$ (or a boundary condition), the system matrix can be fully reused, but the complications that come with the right hand side have to be addressed each time again.

The Petrov-Galerkin formulation of the LOD clearly solves these problems without suffering from a loss in accuracy. In particular:
\begin{itemize}
\item The PG-LOD does not require any communication between two different patches and the resulting stiffness matrix is sparser than the one for the symmetric LOD. In particular, the entries of the system matrix $S$ can be computed with the following algorithm:\\
\begin{algo}
\label{algorithm-computing-stiffness-pg-lod}
 \rule{0.9\textwidth}{.7pt} \\
Let $S$ denote the empty system matrix with entries $S_{ij}$.
 \rule{0.9\textwidth}{.7pt} \\
Algorithm: assembleSystemMatrix( $\T_H$, $\T_h$, $k$ )
 \rule{0.9\textwidth}{.7pt} \\
In parallel \ForEach{$T \in \T_H$}
{
\ForEach{$z_i \in \mathcal{N}_H^0$ \mbox{\rm with} $z_i \in \overline{T}$}
 {
   compute $Q_h^T(\Phi_{z_i}) \in W_h(U_k(T))$ \mbox{\rm with}
\begin{align*}
 a(Q_h^T(\Phi_{z_i}),w_h)=-\int_{T} A \nabla \Phi_{z_i} \cdot \nabla w_h  \quad\text{for all }w_h\in W_h(U_k(T)).
\end{align*}
\ForEach{$z_j \in \mathcal{N}_H^0$ \mbox{\rm with} $z_j \in \overline{U(T)}$}
{
update the system matrix:
\begin{align*}
S_{ji} \hspace{3pt} +\hspace{-3pt}= \int_{\omega_j} A \left( \Phi_{z_i} + \nabla Q_h^T( \Phi_{z_i} ) \right) \cdot \nabla \Phi_{z_j}.
\end{align*}
}
 }
}
 \rule{0.9\textwidth}{.7pt}
\end{algo}\\
Observe that it is possible to directly add the local terms $a(\Phi_{z_i} + Q_h^T( \Phi_{z_i} ),  \Phi_{z_j})$ to the system matrix $S$, i.e. the assembling of the matrix is parallelized in a straightforward way and does not rely on the availability of other results.
\item Replacing the source term $f$ in (\ref{exact-solution}), only involves the re-computation of the terms $(f,\Phi_i)_{L^2(\omega_i)}$ for coarse nodal basis functions $\Phi_i$, i.e. the same costs as for the standard FE method on the coarse scale. Furthermore, the choice of the quadrature rule relies purely on $f$, but not on the oscillations of $A$.
\end{itemize}

Besides the previously mentioned advantages, there is still a memory consuming issue left: the storage of the local correctors $Q_h^T( \Phi_{z_i} )$. These local correctors need to be saved in order to express the final approximation $u_H^{\text{\tiny PG-LOD}}$ which is spanned by the multiscale basis functions $\Phi_i + Q_h(\Phi_i)$. As long as we are interested in a good $H^1$-approximation of the solution, this problem seems to be unavoidable. However, in many applications we can even overcome this difficulty by exploiting another very big advantage of the PG-LOD: Theorem \ref{t:a-priori-local-pg} predicts that alone the 'coarse part' of $u_H^{\text{\tiny PG-LOD}}$, denoted by $u_H:=((I_H\vert_{V_H})^{-1} \circ I_H)(u_H^{\text{\tiny PG-LOD}})) \in V_H$, already exhibits very good $L^2$-approximation properties, i.e. if $k \gtrsim |\log(H)|$ we have essentially
\begin{align*}
\| u_h - u_H \|_{L^2(\Omega)} \leq \mbox{O}(H).
\end{align*}
In contrast to $u_H^{\text{\tiny PG-LOD}}$, the representation of $u_H$ does only require the classical coarse finite element basis functions. Hence, we can use the  algorithm presented earlier, with the difference that we can immediately delete $Q_h^T(\Phi_i)$ after updating the stiffness matrix. Observe that even if computations have to be repeated for different source terms $f$, this stiffness matrix can be reused again and again. Also, if a user is interested in the fine scale behavior in a local region (but the $Q_h^T(\Phi_i)$ were already dropped), it is still possible to quickly re-compute the desired local corrector for the region.

As an application, consider for instance the case that the problem
\begin{align*}
\int_{\Omega} A \nabla u \cdot \nabla v = \int_{\Omega} f v
\end{align*}
describes the diffusion of a pollutant in groundwater. Here, $u$ describes the concentration of the pollutant, $A$ the (rapidly varying) hydraulic conductivity and $f$ a source term describing the injection of the pollutant. In such a scenario, there is typically not much interest in finding a good approximation of the (locally fluctuating) gradient $\nabla u$, but rather in the macroscopic behavior of pollutant $u$, i.e. in purely finding a good $L^2$-approximation that allows to conclude where the pollutant spreads. A similar scenario is the investigation of the properties of a composite material, where $A$ describes the heterogenous material and $f$ some external force. Again, the interest is in finding an accurate $L^2$-approximation. Besides, the corresponding simulations are typically performed for a variety of different source terms $f$, investigating different scenarios. In this case, the PG-LOD yields reliable approximations with very low costs, independent of the structure of $A$.

\begin{remark}[Relation to the $L^2$-projection]
\label{remark-on-L2-projection}Assume the setting of this subsection. In \cite{Malqvist:Peterseim:2012} it was shown that $(v_H,w_h)_{L^2(\Omega)}=0$ for all $v_H\in V_H$ and $w_h\in W_h$, i.e. $V_H$ and $W_h$ are $L^2$-orthogonal. This implies that
$$(I_H\vert_{V_H})^{-1} \circ I_H = P_{L^2},$$
with $P_{L^2}$ denoting the $L^2$-projection on $V_H$. To verify this, let $v_h \in V_h$ be arbitrary. Then due to $V_h = V_H \oplus W_h$ we can write $v_h=v_H+w_h$ (with $v_H \in V_H$ and $w_h \in W_h$) and observe for all $\Phi_H \in V_H$
\begin{align*}
 \int_{\Omega} P_{L^2}(v_h) \hspace{2pt} \Phi_H &= \int_{\Omega} v_h \hspace{2pt} \Phi_H \overset{V_H \perp_{L^2} W_h}{=} \int_{\Omega} v_H \hspace{2pt} \Phi_H \\
 &= \int_{\Omega} ((I_H\vert_{V_H})^{-1} \circ I_H)(v_H) \hspace{2pt} \Phi_H 
 \overset{I_H(w_h)=0}{=} \int_{\Omega} ((I_H\vert_{V_H})^{-1} \circ I_H)(v_h) \hspace{2pt} \Phi_H.
\end{align*}
Hence, $u_H^{\text{\tiny PG-LOD}}=u_H + Q_h(u_H)$ with $u_H=P_{L^2}(u_H^{\text{\tiny PG-LOD}})$.
\end{remark}

\begin{conclusion}[Application to homogenization problems]
\label{homogenization-setting}Assume the setting of this subsection and let $P_{L^2}$ denote the $L^2$-projection on $V_H$ as in Remark \ref{remark-on-L2-projection}. We consider now a typical homogenization setting with $(\epsilon)_{>0}\subset \R_{>0}$ being a sequence of positive parameters that converges to zero. Let $Y:=[0,1]^d$ denote the unique cube in $\R^d$ and let $A^{\epsilon}(x)=A_p(x,\frac{x}{\epsilon})$ for a function $A_p \in W^{1,\infty}(\Omega \times Y)$ that is $Y$-periodic in the second argument (hence $A^{\epsilon}$ is rapidly oscillating with frequency $\epsilon$). The corresponding exact solution of problem (\ref{exact-solution}) shall be denoted by $u_{\epsilon} \in H^1_0(\Omega)$. It is well known (c.f. \cite{All92}) that $u_{\epsilon}$ converges weakly in $H^1$ (but not strongly) to some unique function $u_0 \in H^1_0(\Omega)$. Furthermore, if $\|f\|_{L^2(\Omega)}\lesssim 1$ it holds $\| u_{\epsilon} - u_0 \|_{L^2(\Omega)} \lesssim \epsilon$. With Theorem \ref{t:a-priori-local-pg} together with Remark \ref{remark-on-L2-projection} and standard error estimates for FE problems, we hence obtain:
\begin{align*}
\| u_0 - u_H \|_{L^2(\Omega)} \lesssim \epsilon + \left(\frac{h}{\epsilon}\right)^2 + H,
\end{align*}
for $u_H=P_{L^2}(u_H^{\text{\tiny PG-LOD}})$. Homogenization problems are typical problems, where one is often purely interested in the $L^2$-approximation of the exact solution $u_{\epsilon}$, meaning one is interested in the homogenized solution $u_0$.
\end{conclusion}

As discussed in this section, the PG-LOD can have significant advantages over the (symmetric) G-LOD with respect to computational costs, efficiency and memory demand. In Subsection \ref{num-experiment-cg-pg-lod} we additionally present a numerical experiment to demonstrate that the approximations produced by the PG-LOD are in fact very close to the ones produced by (symmetric) G-LOD, i.e. not only of the same order as predicted by the theorems, but also of the same quality.

\begin{remark}[Nonlinear problems]
The above results suggest that the advantages can become even more pronounced for certain types of nonlinear problems. For instance, consider a well-posed problem of the type
\begin{align*}
- \nabla \cdot A \nabla u + c(u) = f,
\end{align*} 
for a nonlinear function $c$. Here, it is intuitively reasonable to construct $Q_h(\Phi_H)$ as before using only the linear elliptic part of the problem.
This is a preprocessing step that is done once and can be immediately deleted stiffness matrix is calculated and saved. Then we solve for $u_H \in V_H$ that satisfies
\begin{align*}
( A \nabla (u_H + Q_h(u_H)), \nabla \Phi_H )_{L^2(\Omega)} + ( c(u_H) ,\Phi_H )_{L^2(\Omega)} = ( f ,\Phi_H )_{L^2(\Omega)} \quad \mbox{for all } \Phi_H\in V_H.
\end{align*}
Clearly, typical iterative solvers can be utilized to solve this variational problem. This iteration is inexpensive because it is done in $V_H$ and the preconstructed stiffness matrix can be fully reused within every iteration and since the other contributions are independent of $Q_h$. Performing iterations
on the coarse space for solving nonlinear problems within the framework of multiscale finite element (MsFEMs) has been investigated (see
for example \cite{MR2062582} and \cite{Efendiev:Hou:Ginting:2004}). 
\end{remark}

\subsection{Example 2: Discontinuous Galerkin Finite Element Method}
\label{subsection-pg-dg-lod}

In this subsection, we apply the results of Section \ref{subsection-pg-lod} to a LOD Method that is based on a Discontinuous Galerkin approach. The DG-LOD was originally proposed in \cite{EGMP13}
and fits into the framework proposed in Section \ref{subsection:lod-framework}. First, we show that the setting fulfills assumptions (A4)-(A8) and after we discuss the advantage of the PG DG-LOD over the symmetric DG-LOD. For simplification, we assume that $A$ is piecewise constant with respect to the fine mesh $\T_h$ so that all of the subsequent traces are well-defined.

Again, we make the same assumptions on the partitions $\T_H$ and $\T_h$ as in Section \ref{subsection:two-grid:disc} and  additionally assume that $\T_H$ and $\T_h$ are either triangular or quadrilateral meshes. The corresponding total sets of edges (or faces for $d=3$) are denoted by $\Epsilon_h$ (for $\T_h$), where $\Epsilon_h(\Omega)$ and $\Epsilon_h(\partial\Omega)$ denotes the set of interior and boundary edges, respectively.

Furthermore, for $\T=\T_H,\T_h$ we denote the spaces of discontinuous functions with total, respectively partial, polynomial degree equal to or less than $1$ by
\begin{align*}
\mathcal{P}_1(\T) &:= \{v \in L^2\Omega) \;\vert \;\forall T\in\T,v\vert_T \text{ is a polynomial of total degree}\leq 1\} \quad \mbox{and}\\
\mathcal{Q}_1(\T) &:= \{v \in L^2(\Omega) \;\vert \;\forall T\in\T,v\vert_T \text{ is a polynomial of partial degree}\leq 1\}
\end{align*}
and define
$V_h:=P_1(\T_h)$ if $\T_h$ is a triangulation and $V_h:=Q_1(\T_h)$
if it is a quadrilation. The coarse space $V_H \subset V_h$ is defined in the same fashion with $\T_H$ instead of $\T_h$. Note that these spaces are no subspaces of $H^1(\Omega)$ as in the previous example. For this purpose, we define $\nabla_h$ to be the $\T_h$-piecewise gradient (i.e. $(\nabla_h v_h)\vert t := \nabla (v_h\vert t )$ for $v_h \in V_h$ and $t \in \T_h$).

For every edge/face $e \in \Epsilon_h(\Omega)$ there are two adjacent elements $t^-,t^+ \in \T_h$ with $e=\partial t^- \cap \partial t^+$. We define the jump and average operators across $e\in\Epsilon_h(\Omega)$ by
\begin{align*}
[v] &:= (v\vert t^-  - v\vert t^+)\quad\text{and}\quad\{ A \nabla v \cdot n \} := \frac{1}{2}( (A \nabla v)\vert t^- + (A \nabla v)\vert t^+) \cdot n,
\end{align*}
where $n$ be the unit normal on $e$ that points from $t^-$ to $t^+$, and on $e\in\Epsilon_h(\partial\Omega)$ by
\begin{align*}
[v] &:= w\vert t\quad\text{and}\quad \{ A \nabla v \cdot n \} :=  (A \nabla v)\vert t  \cdot n
\end{align*}
where $n$ is the outwards unit normal of $t\in \T_h$ (and $\Omega$). Observe that flipping the roles of $t^-$ and $t^+$ leads to the same terms in the bilinear form defined below.

With that, we can define the typical bilinear form that characterizes the Discontinuous Galerkin method:
\begin{eqnarray*}
\lefteqn{a_h( v_h, w_h ) := (A \nabla_h v_h, \nabla_h w_h)_{L^2(\Omega)}} \\
&\enspace& - \sum_{e \in \Epsilon_h} \left( 
(\{ A \nabla v_h \cdot n \},  [w_h] )_{L^2(e)} + (\{ A \nabla w_h \cdot n \},  [v_h] )_{L^2(e)} \right)
+ \sum_{e \in \Epsilon_h} \frac{\sigma}{h_e} ( [v_h],  [w_h] )_{L^2(e)}.
\end{eqnarray*}
Here, $\sigma$ is a penalty parameter that is chosen sufficiently large and $h_e=\text{diam}(e)$. The coarse bilinear form $a_H(\cdot,\cdot)$ is defined analogously with coarse scale quantities. It is well known, that $a_h(\cdot,\cdot)$ (respectively $a_H(\cdot,\cdot)$) is a scalar product on $V_h$ (respectively $V_H$). Consequently (A4) is fulfilled. As a norm on $V_h$ that fulfills assumption (A5), we can pick
\begin{align*}
||| v |||_h := \| A^{1/2} \nabla_h v \|_{L^2(\Omega)} +  \left( \sum_{e \in \Epsilon_h} \frac{\sigma}{h_e} \| [v] \|^2_{L^2(e)} \right)^{1/2}.
\end{align*}
Analogously, we define $||| v |||_H$ to be a norm on $V_H$. In this case we obtain the constant $C_{H,h}=\sqrt{H/h}$. Assumption (A6) is obviously fulfilled.

As the operator in assumption (A7) we pick the $L^2$-projection on $V_H$, i.e. for $v_h \in V_h$ we have
\begin{align*}
(I_h(v_h),\Phi_H)_{L^2(\Omega)} = (v_h,\Phi_H)_{L^2(\Omega)} \qquad \mbox{for all } \Phi_H \in V_H.
\end{align*} 
In \cite[Lemma 5]{EGMP13} it was proved that the operator fulfills the desired approximation and stability properties. Since $I_H$ is a projection, we have $I_H = ( I_H\vert_{V_H})^{-1}$ and hence obviously also $|||\cdot|||_H$-stability of the inverse on $V_H$.

The localized bilinear form $a_h^T( \cdot , \cdot )$ in (\ref{localization-of-a_h-to-a_h-T}) is defined by $a_h^T( v_h, w_h ) := a_h( \chi_Tv_h, w_h )$ where $\chi_T=1$ in $T$ and $0$ otherwise, is the element indicator function. Obviously we have for all $v_h,w_h \in V_h$ that 
\begin{align*}
a_h ( v_h , w_h ) = \sum_{T \in \T_H} a_h^T( v_h, w_h ).
\end{align*}

In \cite{EGMP13} the DG-LOD is presented in a slightly different way, in the sense that there exists no general corrector operator $Q_h$. Instead, 'basis function correctors' are introduced. However, it is easily checkable that each of these 'basis function correctors' is nothing but the corrector operator, defined via (\ref{local-corrector-problem}), applied to an original coarse basis function. Therefore, the correctors given by (\ref{local-corrector-problem}) are just an extension of the definition to arbitrary coarse functions. Hence, both methods coincide and are just presented in a different way.

Next, we discuss (A8). This property was shown in \cite[Lemma 11 and 12]{EGMP13}, however not explicitly for the setting that we established in Definition \ref{localization-of-solution-space}. It was only shown for $\Phi_H=\lambda_{T,j}$, where $\lambda_{T,j} \in V_H$ denotes a basis function on $T$ associated with the $j$'th node. However, the proofs in \cite{EGMP13} directly generalize to the local correctors $Q_h^T(\Phi_H)$ given by equation \eqref{local-corrector-problem}. More precisely, following the proofs in \cite{EGMP13} it becomes evident that the availability of the required decay property (A8) purely relies on the fact, that the right hand side in the local problems is only locally supported (with a support that remains fixed, even if the patch size decreases). Therefore (A8) can be proved analogously.

Finally, assumption (A9) is not easy to verify. It is obviously fulfilled for the case $U(T)=\Omega$, but the generalized result is harder to verify. The following result holds under some restrictions on the meshes $\T_H$ and $\T_h$.
\begin{lemma}[inf-sup stability of Discontinuous Galerkin PG-LOD]
\label{inf-sup-stability-dg-pg-lod}
Assume that $\T_H$ is quasi-uniform and that there exists an exponent $m \in \mathbb{R}$ with $m > 1$ such that for all $T\in \T_H$ 
$$\mbox{\rm diam}(T)^m \lesssim
\min \{ h_e | \hspace{2pt} e \in \mathcal{E}_h  \enspace \mbox{and} \enspace e \subset \overline{T} \}$$
(i.e. if $\T_h$ is also quasi-uniform we assume $H^m \lesssim h$). If $k \in \mathbb{N}$ is such that $k \gtrsim \frac{(m+3)}{2} |\log(H)|$ then, for sufficiently small $H$, there exist generic positive constants $C_1,C_2$ such that \begin{align*}
\inf_{\Phi_H \in V_H} \sup_{\Phi^{\operatorname*{ms}}\in \Vms} \frac{a_h(\Phi^{\operatorname*{ms}},\Phi_H)}{||| \Phi^{\operatorname*{ms}} |||_h||| \Phi_H |||_H} \ge C_1( \alpha - C_2 H).
\end{align*}
Hence, we have inf-sup stability for sufficiently small $H$.
\end{lemma}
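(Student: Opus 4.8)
The plan is to verify the inf-sup condition by the standard Petrov-Galerkin device: for each fixed test function $\Phi_H \in V_H$ I would exhibit one trial function in $\Vms$ that already realizes a ratio close to $\alpha$, and the only natural candidate is the associated multiscale function $\Phi^{\operatorname*{ms}} := \Phi_H + Q_h(\Phi_H)$. Next to it I would introduce its idealized, non-localized counterpart $\Psi := \Phi_H + Q_h^{\Omega}(\Phi_H) \in \Vms_{\Omega}$, which is $a_h(\cdot,\cdot)$-orthogonal to $W_h$. Writing $\Phi_H = \Psi - Q_h^{\Omega}(\Phi_H)$ and using $Q_h^{\Omega}(\Phi_H) \in W_h$, I would split
\[
 a_h(\Phi^{\operatorname*{ms}}, \Phi_H) = a_h(\Phi^{\operatorname*{ms}}, \Psi) - a_h(\Phi^{\operatorname*{ms}}, Q_h^{\Omega}(\Phi_H)),
\]
which is the DG incarnation of the perturbation viewpoint of Remark~\ref{quasi-orth}; everything after this is bookkeeping on the two terms.

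For the first, \emph{coercive}, term I would note that $\Phi^{\operatorname*{ms}} - \Psi = Q_h(\Phi_H) - Q_h^{\Omega}(\Phi_H) \in W_h$ while $\Psi \in \Vms_{\Omega}$, so the $a_h$-orthogonality of $\Vms_{\Omega}$ and $W_h$ kills the cross term and leaves $a_h(\Phi^{\operatorname*{ms}}, \Psi) = a_h(\Psi, \Psi) \ge \alpha |||\Psi|||_h^2$ by (A5). For the second, \emph{perturbation}, term I would apply the quasi-orthogonality estimate of Remark~\ref{quasi-orth} with $w_h = Q_h^{\Omega}(\Phi_H)$, giving
\[
 |a_h(\Phi^{\operatorname*{ms}}, Q_h^{\Omega}(\Phi_H))| \lesssim k^{d/2}\theta^{k}\,(1/H)^{p}\,|||\Psi|||_h\,|||Q_h^{\Omega}(\Phi_H)|||_h,
\]
with $p=1$ in the DG case, so that $a_h(\Phi^{\operatorname*{ms}}, \Phi_H) \ge \alpha|||\Psi|||_h^2 - k^{d/2}\theta^{k}(1/H)\,|||\Psi|||_h\,|||Q_h^{\Omega}(\Phi_H)|||_h$.

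It then remains to express the bound through $|||\Psi|||_h^2$. In the denominator, (A8) gives $|||\Phi^{\operatorname*{ms}}|||_h \le |||\Psi|||_h + |||Q_h(\Phi_H) - Q_h^{\Omega}(\Phi_H)|||_h \lesssim |||\Psi|||_h$, and since $I_H(\Psi) = I_H(\Phi_H)$ (because $Q_h^{\Omega}(\Phi_H) \in \ker I_H$) we have $\Phi_H = ((I_H\vert_{V_H})^{-1}\circ I_H)(\Psi)$, whence the stability bounds of (A7) yield $|||\Phi_H|||_H \lesssim |||\Psi|||_h$. The genuinely DG-specific point is the factor $|||Q_h^{\Omega}(\Phi_H)|||_h$ in the perturbation: as $Q_h^{\Omega}(\Phi_H) = -P_h\Phi_H$ is an $a_h$-orthogonal projection one has $|||Q_h^{\Omega}(\Phi_H)|||_h \lesssim |||\Phi_H|||_h$, and for the coarse, piecewise-polynomial $\Phi_H$ the fine and coarse DG norms are related by the \emph{reverse} inequality $|||\Phi_H|||_h \le C_{H,h}\,|||\Phi_H|||_H \lesssim C_{H,h}\,|||\Psi|||_h$, with $C_{H,h}=\sqrt{H/h}$ arising from the mismatch between the interior-penalty weights $1/h_e$ and $1/H$ on the coarse-edge jumps. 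Here the mesh hypothesis $\operatorname{diam}(T)^{m} \lesssim h_e$ enters to give $C_{H,h}=\sqrt{H/h}\lesssim H^{-(m-1)/2}$.

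Collecting the three estimates and taking the infimum over $\Phi_H$, I would arrive at
\[
 \inf_{\Phi_H \in V_H}\sup_{\Phi^{\operatorname*{ms}}\in \Vms} \frac{a_h(\Phi^{\operatorname*{ms}},\Phi_H)}{|||\Phi^{\operatorname*{ms}}|||_h\,|||\Phi_H|||_H} \gtrsim \alpha - C\,k^{d/2}\theta^{k}\,(1/H)\,C_{H,h} \gtrsim \alpha - C\,k^{d/2}\theta^{k}\,H^{-(m+1)/2}.
\]
Choosing $k \gtrsim \frac{m+3}{2}|\log(H)|$ forces $\theta^{k}\lesssim H^{(m+3)/2}$, so the decay term is $\lesssim (\log(1/H))^{d/2}H \lesssim H$ for $H$ small, which produces $C_1(\alpha - C_2 H)$; the threshold exponent decomposes transparently as $\frac{m+3}{2}=1+\frac{m-1}{2}+1$, i.e. the $(1/H)$-pollution ($p=1$), the $C_{H,h}=H^{-(m-1)/2}$ factor, and the target order $H$. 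I expect the main obstacle to be precisely this DG norm bookkeeping: rigorously establishing the reverse equivalence $|||\Phi_H|||_h \le C_{H,h}|||\Phi_H|||_H$ with the correct $\sqrt{H/h}$ dependence on the jump penalties, and checking that the bounds on $|||Q_h^{\Omega}(\Phi_H)|||_h$ and on the localization defect $Q_h(\Phi_H)-Q_h^{\Omega}(\Phi_H)$ survive in the DG norm without generating extra uncontrolled powers of $H/h$. The coercivity skeleton of the argument is, by contrast, identical to the continuous case.
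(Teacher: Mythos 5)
Your argument is correct and, up to a mirror-image reorganization, it is the paper's proof. The paper fixes the same trial function $\Phi^{\operatorname*{ms}}=\Phi_H+Q_h(\Phi_H)$ but splits $a_h(\Phi^{\operatorname*{ms}},\Phi_H)=a_h(\Phi^{\operatorname*{ms}},\Phi^{\operatorname*{ms}})-a_h(\Phi^{\operatorname*{ms}},Q_h(\Phi_H))$, extracting coercivity in $|||\Phi^{\operatorname*{ms}}|||_h^2$ and reducing the perturbation, via the $a_h$-orthogonality of $\Vms_{\Omega}$ and $W_h$, to $a_h(Q_h(\Phi_H)-Q_h^{\Omega}(\Phi_H),\,Q_h(\Phi_H))$; you instead peel off the ideal corrector, extract coercivity in $|||\Psi|||_h^2$ and end with $a_h(Q_h(\Phi_H)-Q_h^{\Omega}(\Phi_H),\,Q_h^{\Omega}(\Phi_H))$. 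The trade-off is minor: the paper must convert $|||\Phi_H+Q_h^{\Omega}(\Phi_H)|||_h$ into $|||\Phi^{\operatorname*{ms}}|||_h$ by a self-referential absorption of the localization defect (valid only for small $H$), which your version avoids since the right-hand side of (A8) is already $|||\Psi|||_h$ and the denominator bound $|||\Phi^{\operatorname*{ms}}|||_h\lesssim|||\Psi|||_h$ follows directly; in exchange you need the projection stability $|||Q_h^{\Omega}(\Phi_H)|||_h\lesssim|||\Phi_H|||_h$, which the paper sidesteps via the cruder triangle bound $|||Q_h(\Phi_H)|||_h\le|||\Phi^{\operatorname*{ms}}|||_h+|||\Phi_H|||_h$. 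All quantitative ingredients coincide: the DG norm comparison $|||\Phi_H|||_h\lesssim\sqrt{H/h}\,|||\Phi_H|||_H\lesssim H^{(1-m)/2}|||\Phi_H|||_H$ on $V_H$ (which the paper states as ``easy to see''; note it is a reverse-type inequality restricted to coarse functions, not literally the constant $C_{H,h}$ of (A5), which is defined for the opposite direction), the bound $|||\Phi_H|||_H\lesssim|||\Phi^{\operatorname*{ms}}|||_h$ through (A7), and the exponent split $\tfrac{m+3}{2}=1+\tfrac{m-1}{2}+1$. One cosmetic caveat: your final step ``$(\log(1/H))^{d/2}H\lesssim H$'' is literally false, and the polynomial factor $k^{d/2}$ must be absorbed by enlarging the generic constant in $k\gtrsim\tfrac{m+3}{2}|\log(H)|$ (or conceding $H^{1-\epsilon}$); the paper commits exactly the same silent absorption in its line ``$C(1/H)k^{d/2}\theta^{k}\le CH^{n-1}$'', so on this point you are on equal footing with the published argument.
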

The proof is given in Section \ref{section:proofs:pg:lod}. We note that the inf-sup stability can be observed numerically already under weaker assumptions (see Section \ref{section:numerical:experiments}) and that it is in general 'a reasonable thing to expect' as discussed in Remark \ref{quasi-orth}.

In conclusion, the Discontinuous Galerkin LOD in Petrov-Galerkin formulation fulfills the assumptions of our framework (up to a discussion on (A9)). The advantages that we discussed in the previous subsection for the Petrov-Galerkin Continuous Finite Element Method in terms of memory and efficiency
remains true. However, for the PG DG-LOD there is a very important additional advantage. It is known that the classical DG method has the feature of local mass conservation with respect to the elements of the underlying mesh. This can be easily checked by testing with the indicator function of an element $T$ in the variational formulation of the method. The local mass conservation is a highly desired property for various flow and transport problems. However, the DG-LOD does not preserve this property, since the indicator function of an element (whether coarse or fine) is not in the space $\Vms$. This problem is solved in the PG DG-LOD, where we can test with any element from $V_H$ and in particular with the indicator function of a coarse element. Hence, in contrast to the symmetric DG-LOD, the PG DG-LOD is locally mass conservative with respect to coarse elements $T \in \T_H$. This allows for example the coupling of the PG DG-LOD for an elliptic problem with the solver for a hyperbolic conservation law, which was not possible before without relinquishing the mass conservation. We discuss this further in the next subsection.

\subsection{Perspectives towards Two-Phase flow}
\label{ssec:tpflow}
In this subsection, we investigate an application of the Petrov-Galerkin DG-LOD in the simulation of two-phase flow as governed by the Buckley-Leverett equation. Specifically, the LOD framework is utilized to solve the pressure equation, which is an elliptic boundary value problem, and is coupled with 
a solver for a hyperbolic conservation law. The Buckley-Leverett equation can be used to model two-phase flow in a porous medium. Generally, the flow of two immiscible and incompressible fluids is driven by the law of mass balance for the two fluids:
\begin{align}
\label{tpf-system}{\Theta} \partial_t S_{\alpha} + \nabla \cdot \bfs{v}_{\alpha} &= q_{\alpha} \qquad \mbox{in } \Omega \times (0,T_{end}] \quad \mbox{for } \alpha=w,n.
\end{align}
Here, $\Omega$ is a computational domain, $(0,T_{end}]$ a time interval, the unknowns $S_{w},S_{n}:\Omega \rightarrow [0,1]$ describe the saturations of a wetting and a non-wetting fluid and $\bfs{v}_{w}$ and $\bfs{v}_{n}$ are the corresponding fluxes. Furthermore, $\Theta$ describes the porosity and $q_{w}$ and $q_n$ are two source terms. Darcy's law relates the fluxes with the two unknown pressures $p_n$ and $p_w$ by
\begin{align*}
\bfs{v}_{\alpha} = - K \frac{k_{\alpha}(S_{\alpha})}{\mu_{\alpha}} (\nabla p_{\alpha} - \rho_{\alpha} \bfs{g}) \qquad \mbox{for} \enspace \alpha = w,n.
\end{align*}
Here, $K$ denotes the hydraulic conductivity, $k_w$ and $k_n$ the relative permeabilities depending on the saturations, $\mu_w$ and $\mu_n$ the viscosities, $\rho_w$ and $\rho_n$ the densities and $\bfs{g}$ the gravity vector. The saturations are coupled via $S_n+S_w=1$ and a relation between the two pressures is typically given by the capillary pressure relation $P_c(S_w)=p_n-p_w$ for a monotonically decreasing capillary pressure curve $P_c$. In this case, we obtain the full two-phase flow system, which consists of two strongly coupled, possibly degenerate parabolic equations. However, if we neglect the gravity and the capillary pressure (i.e. assume that $P_c(S_w)=0$), the system reduces to the so called Buckley-Leverett system with an elliptic pressure equation and an hyperbolic equation for the saturation:
\begin{align}
\label{bl-system}-\nabla \cdot \left( K \lambda(S) \nabla p \right) &= q \quad \mbox{and} \quad
\Theta \partial_t S + \nabla \cdot \left( f(S) \bfs{v} \right) = q_{w},
\end{align}
where we have $S=S_w$, $p=p_w=p_n$, the total mobility $\lambda(S):=\frac{k_{w}(S)}{\mu_{w}}+\frac{k_{n}(1-S)}{\mu_{n}}>0$, the flux $\bfs{v}:=-K \lambda(S) \nabla p$ and the flux function $f(S):=\frac{k_{w}(S)}{\mu_{w} \lambda(S)}$. The total source is given by $q:=\frac{q_w+q_n}{2}$. Observe that (\ref{bl-system}) is obtained from (\ref{tpf-system}) by summing up the equations for the saturations, using $\partial_t(s_n+s_w)= \partial_t 1 = 0$.

An application for which neglecting the capillary pressure is typically justified are oil recovery processes. Here, a replacement fluid, such as water or liquid carbon dioxide, is injected with very high rates into a reservoir to move oil towards a production well. However, often oil is trapped at interfaces of a low and a high conductivity region. This oil would become inaccessible which is why detailed simulations are required before the replacement fluid can be actually injected.

Depending on how the mobilities are chosen, the hyperbolic Buckley-Leverett problem can have one or more weak solutions (c.f. \cite{LeFloch:2002}). One approach for solving the problem numerically is to use an operator splitting technique as proposed in \cite{Aziz:Settari:1979}, which is more well-known as the \emph{(IM)plicit (P)ressure (E)xplicit (S)aturation}, i.e., IMPES. Here, the hyperbolic Buckley-Leverett problem is treated with an explicit time stepping method where the flux velocity $\bfs{v}$ is kept constant for a certain time interval and then updated by solving the elliptic problem with the saturation from the previous time step (see Figure~\ref{fig:os-impes} for an illustration). Alternatively, depending on the type of the flux function $f$, the hyperbolic problem can be also solved implicitly with a suitable numerical scheme for conservation laws (c.f. \cite{Kroener:1997}) where the flux $\bfs{v}$ arising from the Darcy equation is, as in the previous case, only updated every fixed number of time steps.

\begin{figure}
\centering
\includegraphics[scale=0.6]{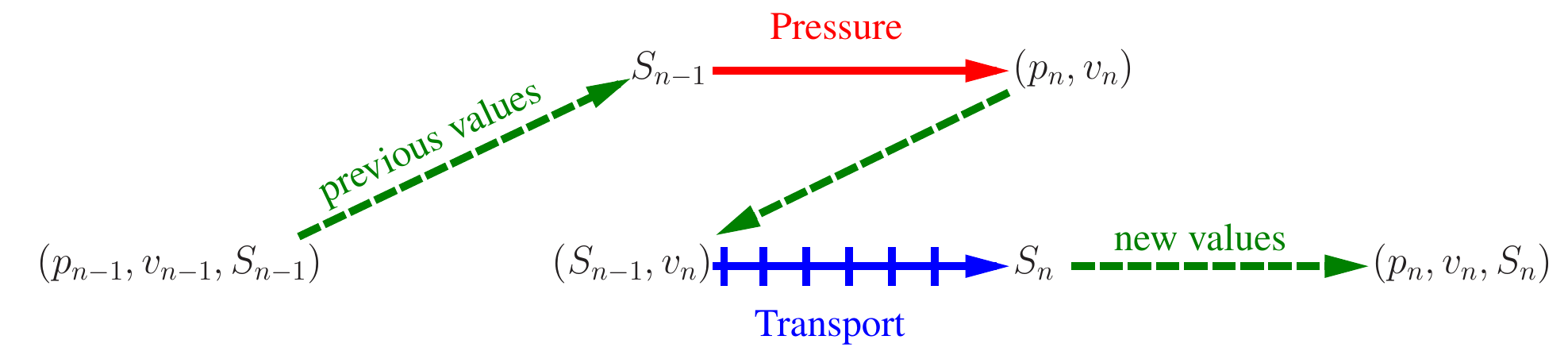}
\caption{A schematic of operator splitting (IMPES) for system \eqref{bl-system}}
\label{fig:os-impes}
\end{figure}

Observe that the difficulties produced by the multiscale character of the problem are primarily related to the elliptic part of the problem. Once the Darcy problem is solved to update the flux velocity, the grid for solving the hyperbolic problem can be significantly coarsened. The reason is that $\bfs{v}=-K \lambda(S) \nabla p$ is possibly still rapidly oscillating, but the relative amplitude of the oscillations is expected to remain small. In other words, just like for standard elliptic homogenization problems, $\bfs{v}$ behaves like an upscaled quantity $-K_0 \lambda(S_0) \nabla p_0$ with effective/homogenized functions $K_0$, $S_0$ and $p_0$.

\begin{remark}
Any realization of the LOD involves to solve a number of local problems that help us to construct the low dimensional space $\Vms$. One might consider to update this space every time that the Darcy problem has to be solved with a new saturation. However, since $\lambda(S)$ is essentially macroscopic, it is generally sufficient to construct the space only once for $\lambda=1$ and reuse the result for every time step. This makes solving the elliptic multiscale problem much cheaper after the multiscale space is assembled. A justification for this reusing of the basis can be e.g. found in \cite{Henning:Malqvist:Peterseim:2013} where it was shown that oscillations coming from advective terms can be often neglected in the construction of a multiscale basis. Under certain assumptions, the relative permeability $\lambda(S)$ can in fact be interpreted as a pure enforcement by an additional advection term. 
\end{remark}

\section{Numerical Experiments}
\label{section:numerical:experiments}
\setcounter{equation}{0}
In this section we present two different model problems. The first one involves a LOD methods for the continuous Galerkin method. Here, we compare the results obtained with the symmetric version of the method with the results obtained for the Petrov-Galerkin version. In the second model problem, we use a PG DG-LOD for solving the Buckley-Leverett system.

\subsection{Continuous Galerkin PG-LOD for elliptic multiscale problems}
\label{num-experiment-cg-pg-lod}

In this section, we use the setting established in Section \ref{subsection-pg-lod-example1}. All experiments were performed with the G-LOD and PG-LOD for the Continuous Finite Element Method. 

\begin{figure}[!ht]
\centering
\includegraphics[width=0.7\textwidth]{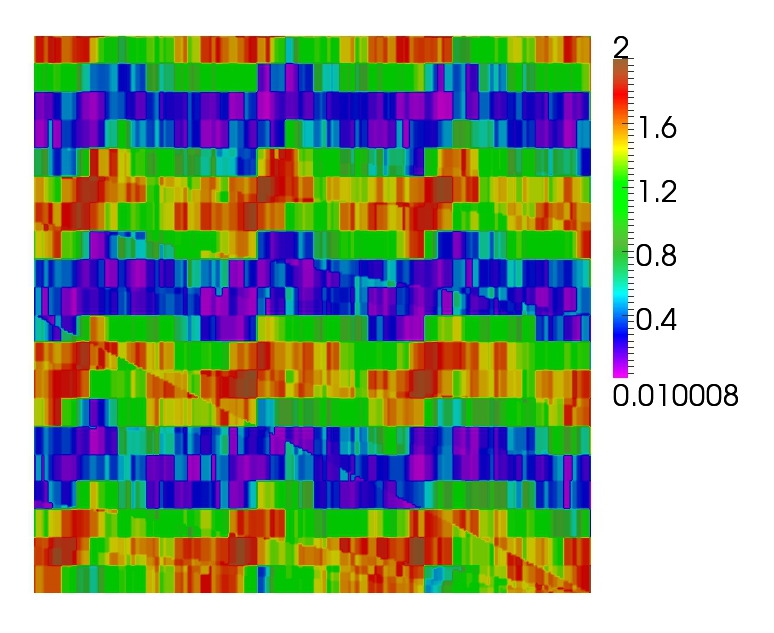}
\caption{Sketch of heterogeneous diffusion coefficient $A_\varepsilon$ defined according to equation (\ref{diffusion-coefficient}).}
\label{diffusion_problem_1}
\end{figure}

\begin{table}[t]
\caption{Results for the errors between LOD approximations and reference solutions. We define $e_h:= u_h -u^{\text{\tiny G-LOD}}$ and $e_h^{\mbox{\tiny PG}}:= u_h -u^{\text{\tiny PG-LOD}}$. Accordingly we define the errors between the reference solution and the coarse parts of the LOD approximations by $e_H:= u_h -P_{L^2}(u^{\text{\tiny G-LOD}})$ (for the symmetric case) and $e_{H}^{\mbox{\tiny PG}}:= u_h -P_{L^2}(u^{\text{\tiny PG-LOD}})$ (for the Petrov-Galerkin case). The reference solution $u_h$ was obtained on a fine grid of mesh size $h=2^{-6}\approx 0.0157 < \varepsilon$ which just resolves the micro structure of the coefficient $A_\varepsilon$. The number of 'coarse grid layers' is denoted by $k$ and determines the patch size $U_k(T)$.}
\label{table-layers-results}
\begin{center}
\begin{tabular}{|c|c|c|c|c|c|c|c|c|}
\hline $H$      & $k$
& {\color{dark-red}$\| e_H \|_{L^2(\Omega)}^{\mbox{\tiny rel}}$}
& {\color{dark-red}$\| e_h \|_{L^2(\Omega)}^{\mbox{\tiny rel}}$}
& {\color{dark-red}$\| e_h \|_{H^1(\Omega)}^{\mbox{\tiny rel}}$}
& {\color{dark-blue}$\| e_H^{\mbox{\tiny PG}} \|_{L^2(\Omega)}^{\mbox{\tiny rel}}$}
& {\color{dark-blue}$\| e_h^{\mbox{\tiny PG}} \|_{L^2(\Omega)}^{\mbox{\tiny rel}}$}
& {\color{dark-blue}$\| e_h^{\mbox{\tiny PG}} \|_{H^1(\Omega)}^{\mbox{\tiny rel}}$} \\
\hline
\hline $2^{-2}$ & 0    & {\color{dark-red}0.3794} & {\color{dark-red}0.3772} & {\color{dark-red}0.6377}
                                 & {\color{dark-blue}0.3778} & {\color{dark-blue}0.3755} & {\color{dark-blue}0.6375} \\
\hline $2^{-2}$ & 1/2 & {\color{dark-red}0.2756} & {\color{dark-red}0.2381} & {\color{dark-red}0.5312}
                                 & {\color{dark-blue}0.2588} & {\color{dark-blue}0.2269} & {\color{dark-blue}0.5628} \\
\hline $2^{-2}$ & 1    & {\color{dark-red}0.2523} & {\color{dark-red}0.1445} & {\color{dark-red}0.3637}
                                 & {\color{dark-blue}0.2544} & {\color{dark-blue}0.1504} & {\color{dark-blue}0.3642} \\
\hline $2^{-2}$ & 3/2 & {\color{dark-red}0.2514} & {\color{dark-red}0.1355} & {\color{dark-red}0.3125}
                                 & {\color{dark-blue}0.2518} & {\color{dark-blue}0.1380} & {\color{dark-blue}0.3162} \\
\hline
\hline $2^{-3}$ & 0   & {\color{dark-red}0.2039} & {\color{dark-red}0.2037}  & {\color{dark-red}0.5048}
                                & {\color{dark-blue}0.2037} & {\color{dark-blue}0.2036}  & {\color{dark-blue}0.5048} \\
\hline $2^{-3}$ & 1   & {\color{dark-red}0.1100} & {\color{dark-red}0.0526}  & {\color{dark-red}0.2278}
                                & {\color{dark-blue}0.1139}  & {\color{dark-blue}0.0619} & {\color{dark-blue}0.2345} \\
\hline $2^{-3}$ & 2   & {\color{dark-red}0.1073} & {\color{dark-red}0.0423} & {\color{dark-red}0.1761} 
                                & {\color{dark-blue}0.1078} & {\color{dark-blue}0.0453} & {\color{dark-blue}0.1807} \\
\hline $2^{-3}$ & 3   & {\color{dark-red}0.1070} & {\color{dark-red}0.0366} & {\color{dark-red}0.1567}
                                & {\color{dark-blue}0.1077} & {\color{dark-blue}0.0399} & {\color{dark-blue}0.1600} \\
\hline
\hline $2^{-4}$ & 0   & {\color{dark-red}0.0874} & {\color{dark-red}0.0873} & {\color{dark-red}0.3563}
                                & {\color{dark-blue}0.0874} & {\color{dark-blue}0.0873} & {\color{dark-blue}0.3563} \\
\hline $2^{-4}$ & 2   & {\color{dark-red}0.0353} & {\color{dark-red}0.0105} & {\color{dark-red}0.0932}
                                & {\color{dark-blue}0.0357} & {\color{dark-blue}0.0123} & {\color{dark-blue}0.0994} \\
\hline $2^{-4}$ & 4   & {\color{dark-red}0.0351} & {\color{dark-red}0.0082} & {\color{dark-red}0.0653}
                                & {\color{dark-blue}0.0353} & {\color{dark-blue}0.0093} & {\color{dark-blue}0.0680} \\
\hline $2^{-4}$ & 6   & {\color{dark-red}0.0351} & {\color{dark-red}0.0080} & {\color{dark-red}0.0634}
                                & {\color{dark-blue}0.0353} & {\color{dark-blue}0.0091} & {\color{dark-blue}0.0662} \\
\hline
\end{tabular}\end{center}
\end{table}

\begin{table}[t]
\caption{Results for the errors between LOD approximations and reference solutions. The errors are defined as in Table \ref{table-layers-results}. The reference solution $u_h$ was obtained on a fine grid of mesh size $h=2^{-8}\approx 0.0039 \ll \varepsilon$ which fully resolves the micro structure of the coefficient $A_\varepsilon$. Again, the number of 'coarse grid layers' is denoted by $k$ and determines the patch size $U_k(T)$.}
\label{table-layers-results-2}
\begin{center}
\begin{tabular}{|c|c|c|c|c|c|c|c|c|}
\hline $H$      & $k$
& {\color{dark-red}$\| e_H \|_{L^2(\Omega)}^{\mbox{\tiny rel}}$}
& {\color{dark-red}$\| e_h \|_{L^2(\Omega)}^{\mbox{\tiny rel}}$}
& {\color{dark-red}$\| e_h \|_{H^1(\Omega)}^{\mbox{\tiny rel}}$}
& {\color{dark-blue}$\| e_H^{\mbox{\tiny PG}} \|_{L^2(\Omega)}^{\mbox{\tiny rel}}$}
& {\color{dark-blue}$\| e_h^{\mbox{\tiny PG}} \|_{L^2(\Omega)}^{\mbox{\tiny rel}}$}
& {\color{dark-blue}$\| e_h^{\mbox{\tiny PG}} \|_{H^1(\Omega)}^{\mbox{\tiny rel}}$} \\
\hline
\hline $2^{-2}$ & 0     & {\color{dark-red}0.3840} & {\color{dark-red}0.3815} & {\color{dark-red}0.6434}
                                  & {\color{dark-blue}0.3820} & {\color{dark-blue}0.3796} & {\color{dark-blue}0.6432} \\
\hline $2^{-2}$ & 1/8  & {\color{dark-red}0.2985} & {\color{dark-red}0.2781} & {\color{dark-red}0.5486}
                                  & {\color{dark-blue}0.2957} & {\color{dark-blue}0.2753} & {\color{dark-blue}0.5513} \\
\hline $2^{-2}$ & 1/4  & {\color{dark-red}0.2852} & {\color{dark-red}0.2592} & {\color{dark-red}0.5578}
                                  & {\color{dark-blue}0.2718} & {\color{dark-blue}0.2472} & {\color{dark-blue}0.5774} \\
\hline $2^{-2}$ & 1/2  & {\color{dark-red}0.2769} & {\color{dark-red}0.2392} & {\color{dark-red}0.5386}
                                  & {\color{dark-blue}0.2607} & {\color{dark-blue}0.2291} & {\color{dark-blue}0.5722} \\
\hline $2^{-2}$ & 3/4  & {\color{dark-red}0.2676} & {\color{dark-red}0.2052} & {\color{dark-red}0.4784}
                                  & {\color{dark-blue}0.2577} & {\color{dark-blue}0.1972} & {\color{dark-blue}0.4956} \\
\hline
\hline $2^{-3}$ & 0     & {\color{dark-red}0.2106} & {\color{dark-red}0.2103} & {\color{dark-red}0.5190}
                                  & {\color{dark-blue}0.2103} & {\color{dark-blue}0.2100} & {\color{dark-blue}0.5190} \\
\hline $2^{-3}$ & 1/4  & {\color{dark-red}0.1480} & {\color{dark-red}0.1375} & {\color{dark-red}0.4510}
                                  & {\color{dark-blue}0.1569} & {\color{dark-blue}0.1469} & {\color{dark-blue}0.4486} \\
\hline $2^{-3}$ & 1/2  & {\color{dark-red}0.1372} & {\color{dark-red}0.1163} & {\color{dark-red}0.3957} 
                                  & {\color{dark-blue}0.1305} & {\color{dark-blue}0.1089} & {\color{dark-blue}0.4029} \\
\hline $2^{-3}$ & 1     & {\color{dark-red}0.1138} & {\color{dark-red}0.0535} & {\color{dark-red}0.2308}
                                  & {\color{dark-blue}0.1176} & {\color{dark-blue}0.0628} & {\color{dark-blue}0.2372} \\
\hline $2^{-3}$ & 3/2  & {\color{dark-red}0.1117} & {\color{dark-red}0.0399} & {\color{dark-red}0.1710}
                                  & {\color{dark-blue}0.1126} & {\color{dark-blue}0.0437} & {\color{dark-blue}0.1761} \\
\hline
\hline $2^{-4}$ & 0     & {\color{dark-red}0.0988} & {\color{dark-red}0.0984} & {\color{dark-red}0.3854}
                                  & {\color{dark-blue}0.0987} & {\color{dark-blue}0.0983} & {\color{dark-blue}0.3854} \\
\hline $2^{-4}$ & 1/2  & {\color{dark-red}0.0637} & {\color{dark-red}0.0592} & {\color{dark-red}0.2896}
                                  & {\color{dark-blue}0.0500} & {\color{dark-blue}0.0442} & {\color{dark-blue}0.2934} \\
\hline $2^{-4}$ & 1     & {\color{dark-red}0.0406} & {\color{dark-red}0.0211} & {\color{dark-red}0.1613}
                                  & {\color{dark-blue}0.0431} & {\color{dark-blue}0.0263} & {\color{dark-blue}0.1690} \\
\hline $2^{-4}$ & 2     & {\color{dark-red}0.0381} & {\color{dark-red}0.0109} & {\color{dark-red}0.0957}
                                  & {\color{dark-blue}0.0385} & {\color{dark-blue}0.0130} & {\color{dark-blue}0.1017} \\
\hline $2^{-4}$ & 3     & {\color{dark-red}0.0380} & {\color{dark-red}0.0087} & {\color{dark-red}0.0726}
                                  & {\color{dark-blue}0.0382} & {\color{dark-blue}0.0099} & {\color{dark-blue}0.0753} \\
\hline
\end{tabular}\end{center}
\end{table}

\begin{figure}
\centering
\includegraphics[scale=0.6]{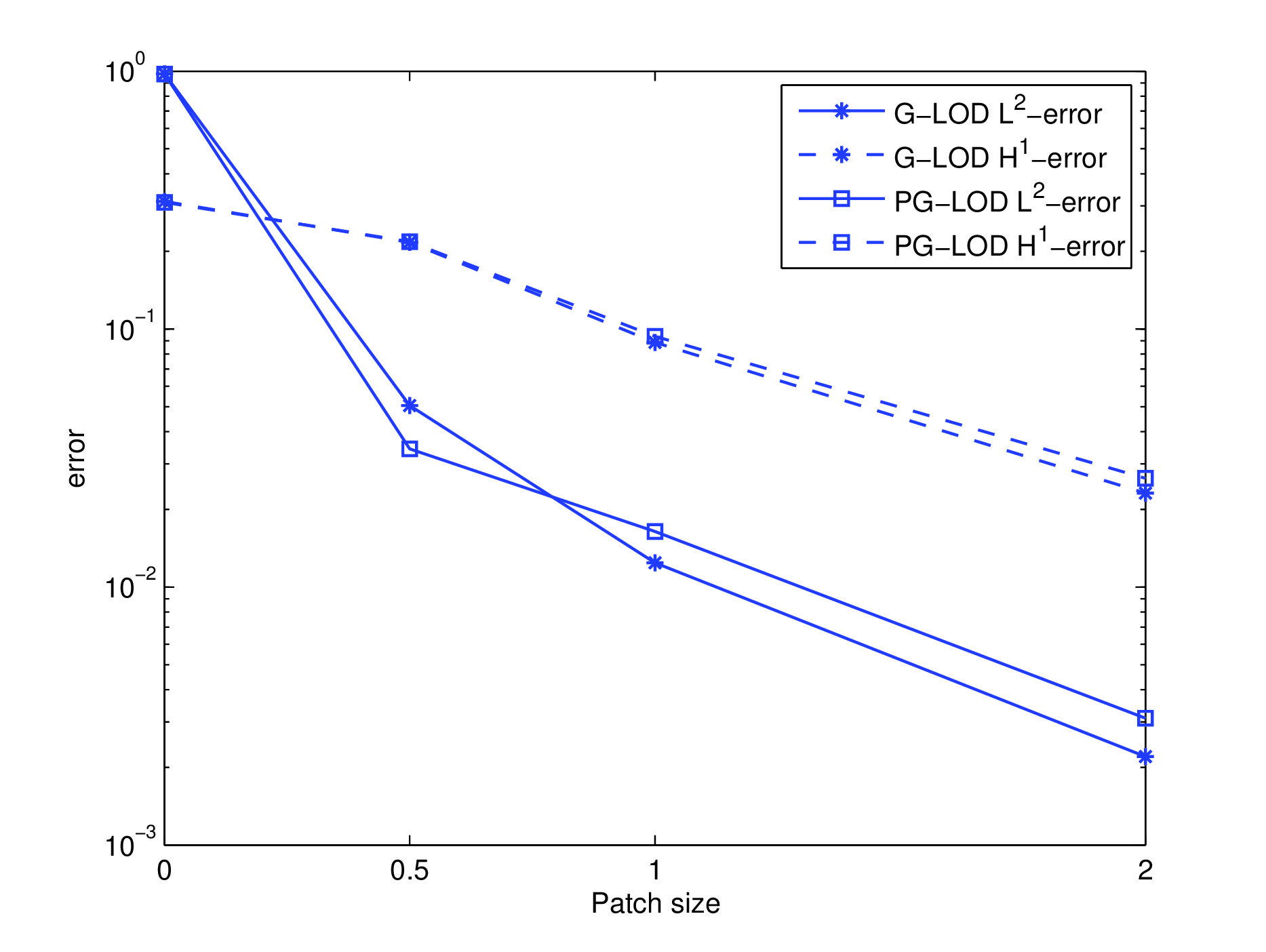}
\caption{The graphic visualizes the error decay in $k$. The results correspond to the results of Table \ref{table-layers-results-2} for $(h,H)=(2^{-8},2^{-4})$. We include $\| e_h \|_{L^2(\Omega)}^{\mbox{\tiny rel}}$, $\| e_h \|_{H^1(\Omega)}^{\mbox{\tiny rel}}$, $\| e_h^{\mbox{\tiny PG}} \|_{L^2(\Omega)}^{\mbox{\tiny rel}}$ and $\| e_h^{\mbox{\tiny PG}} \|_{H^1(\Omega)}^{\mbox{\tiny rel}}$. The $x$-axis depicts the localization parameter $k$ and the $y$-axis the error "$\| e(k) \| - \| e(3) \|$" on the $\log$-scale, where $\| e(k) \|$ denotes an error for $k$-layers (the error $\| e(3) \|$ is hence the limit reference).}
\label{convLayerPic}
\end{figure}

\begin{figure}[!ht]
\centering
\includegraphics[width=1.0\textwidth]{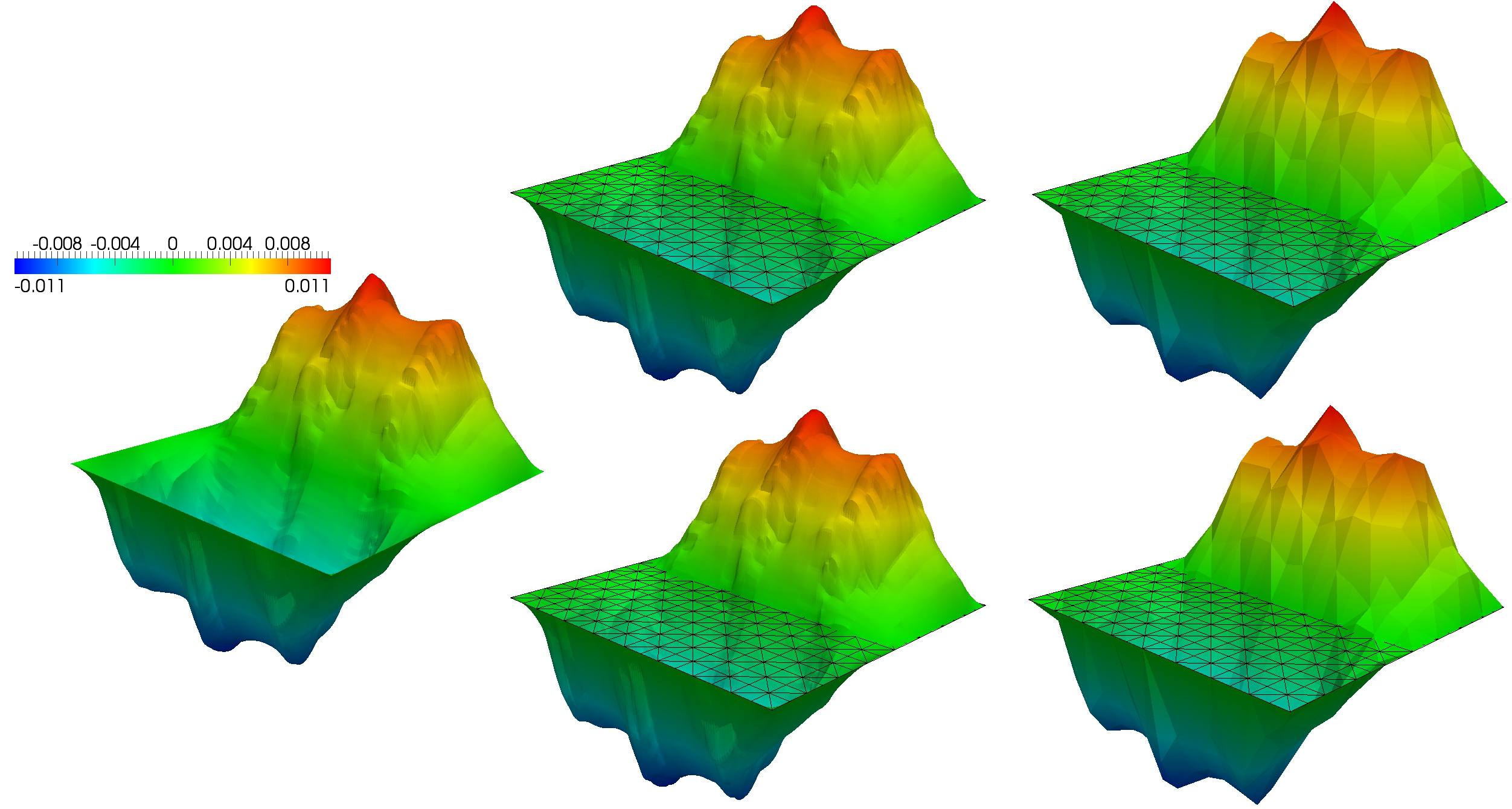}
\caption{The left picture shows the finite element reference solution $u_h$ for $h=2^{-8}$. The remaining pictures show LOD approximations for the case $(H,k)=(2^{-4},2)$, where $k$ denotes the (broken) number of coarse layers. The two top row pictures show the full G-LOD approximation $u^{\text{\tiny G-LOD}}$ (left) and the coarse part of it, i.e. $P_{L^2}(u^{\text{\tiny G-LOD}})$ (right). The bottom row shows the full Petrov-Galerkin LOD approximation $u^{\text{\tiny PG-LOD}}$ (left) and the corresponding coarse part, i.e. $P_{L^2}(u^{\text{\tiny PG-LOD}})$ (right). The grid that is added to each of the pictures shows the coarse grid $\T_H$.}
\label{series-lod-warp}
\end{figure}

\begin{figure}[!ht]
\centering
\includegraphics[width=0.4\textwidth]{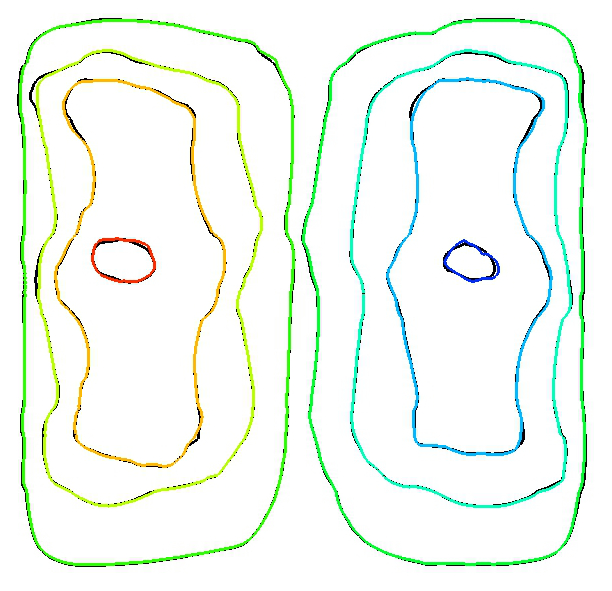}\hspace{20pt}
\includegraphics[width=0.5\textwidth]{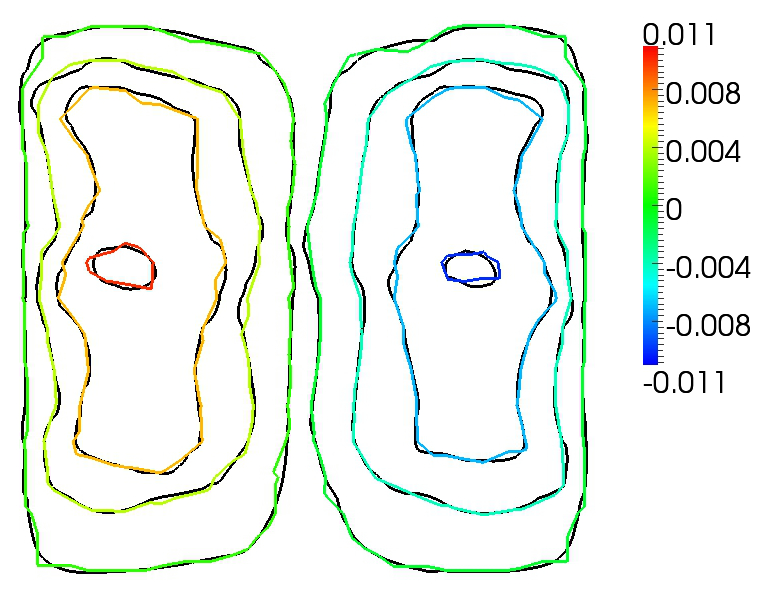}
\caption{The pictures depict a comparison of isolines. The black lines belong to the reference solution $u_h$ for $h=2^{-8}$. The colored isolines in the left picture belong to the PG-LOD approximation $u^{\text{\tiny PG-LOD}}$ and match almost perfectly with the one from the reference solution. The right picture shows the coarse part of $u^{\text{\tiny PG-LOD}}$, i.e. $P_{L^2}(u^{\text{\tiny PG-LOD}})$. We observe that the isolines still match nicely.}
\label{series-lod-isolines}
\end{figure}

In order to be more flexible in the choice of the localization patches $U(T)$, we make subsequently use of "half" or "quarter coarse layers", i.e. $k \in \mathbb{Q}_{\ge 0}$. This can be easily accomplished by extending Definition \ref{category-k} straightforwardly to fine grid layers, i.e. for $k\in \mathbb{Q}_{\ge0}$ and $T \in \T_H$ we define the number of fine layers by $\ell := \lfloor\frac{kH}{h}\rfloor \in \mathbb{N}$ and the corresponding (broken layer) patch by 
$U_k(T) := U_{\text{f},\ell}(T)$,
where iteratively 
$U_{\text{f},\ell}(T) := \cup \overline{\{t\in \T_h\;\vert\; t\cap U_{\text{f},\ell-1}(T)\neq\emptyset\}}$
and
$U_{\text{f},0}(T) := \overline{T}$. This allows us a more careful investigation of the decay behavior.

Let $u_h$ be the solution of (\ref{fine-grid}). In the following we denote by $\| \cdot \|_{L^2(\Omega)}^{\mbox{\tiny rel}}$ and $\| \cdot \|_{H^1(\Omega)}^{\mbox{\tiny rel}}$ the corresponding relative error norms defined by
\begin{align*}
\| u_h - v_h \|_{L^2(\Omega)}^{\mbox{\tiny rel}}:= \frac{\| u_h - v_h \|_{L^2(\Omega)}}{\| u_h\|_{L^2(\Omega)}} \quad \mbox{and} \quad \| u_h - v_h \|_{H^1(\Omega)}^{\mbox{\tiny rel}}:= \frac{\| u_h - v_h \|_{H^1(\Omega)}}{\| u_h\|_{H^1(\Omega)}} 
\end{align*}
for any $v_h \in V_h$. The coarse part ('the $V_H$-part') of an LOD approximation $u^{\text{\tiny G-LOD}}$ (respectively $u^{\text{\tiny PG-LOD}}$) is subsequently denoted by $P_{L^2}(u^{\text{\tiny G-LOD}})$ (respectively $P_{L^2}(u^{\text{\tiny PG-LOD}})$), where $P_{L^2}$ denotes the $L^2$-projection on $V_H$ (see also Remark \ref{remark-on-L2-projection}).

We consider the following model problem. Let $\Omega := \left]0,1\right[^2$ and $\varepsilon:=0.05$. Find $u_\varepsilon\in H^1(\Omega)$ with
\begin{align*}
- \nabla \cdot \left( A_\varepsilon(x) \nabla u_\varepsilon(x) \right) 
&= x_1 - \frac{1}{2} \qquad \text{in } \Omega \\
 u_\varepsilon(x) &= 0 \qquad \hspace{28pt} \text{on } \partial \Omega.
\end{align*}
The scalar diffusion term $A_\varepsilon$ is shown in Figure \ref{diffusion_problem_1}. It is given by
\begin{align}
\label{diffusion-coefficient}A_\varepsilon(x) := (h \circ c_\varepsilon)(x)
\qquad \text{with} \enspace h(t):=\begin{cases}
t^4 &\text{for} \enspace \frac12 < t < 1 \\ 
t^{\frac{3}{2}} &\text{for} \enspace 1 < t < \frac{3}{2}  \\ 
t &\text{else}
\end{cases}
\end{align}
and where
\begin{displaymath}
c_\varepsilon(x_1,x_2):=1 + \frac{1}{10} \sum_{j=0}^4 \sum_{i=0}^{j} \left( \frac{2}{j+1} \cos \left( \bigl\lfloor i x_2 - \tfrac{x_1}{1+i} \bigr\rfloor + \left\lfloor \tfrac{i x_1}\varepsilon \right\rfloor + \left\lfloor \tfrac{ x_2}\varepsilon \right\rfloor \right) \right).
\end{displaymath}

The goal of the experiments is to investigate the accuracy of the PG-LOD, compared to the classical symmetric LOD. Moreover, we investigate the accuracy of the coarse part of the LOD approximation in terms of $L^2$-approximation properties (see Section \ref{subsection-pg-lod-example1} for a corresponding discussion).

In Table \ref{table-layers-results} we can see the results for a fine grid $\T_h$ with resolution $h=2^{-6} < \varepsilon$ which just resolves the micro structure of the coefficient $A_\varepsilon$. Comparing the relative $L^2$- and $H^1$-errors for the G-LOD and the PG-LOD respectively (with the reference solution $u_h$), we observe that the errors are of similar size in each case. In general, we obtain slightly worse results for the Petrov-Galerkin LOD, however the difference is so small that is does not justify the usage of the more memory-demanding (and more expensive) symmetric LOD. For both methods we observe the same nice error decay (in terms of the patch size) that was already predicted by the theoretical results. Comparing the relative $L^2$-errors between $u_h$ and the coarse parts of the LOD-approximations, we observe that they already yield very good approximations. We also observe that they seem to be much more dominated by $H$-error contribution than by the $\theta^{k}$-error contribution (i.e. the error coming from the decay). Using patches consisting of more than $8$ fine element layers did not lead to any significant improvement, while there were still clear improvements visible for the other errors for the full G-LOD approximations. Furthermore, the linear convergence in $H$ is clearly visible for $\| e_H \|_{L^2(\Omega)}^{\mbox{\tiny rel}}$ (respectively $\| e_H^{\mbox{\tiny PG}} \|_{L^2(\Omega)}^{\mbox{\tiny rel}}$) showing that the obtained error estimates seem to be indeed optimal.

The same observations can be made for the errors depicted in Table \ref{table-layers-results-2} for a fine grid $\T_h$ with resolution $h=2^{-8} \ll \varepsilon$. Again, the results for the (symmetric) G-LOD are slightly better than the ones for the PG-LOD, but always of the same order. The exponential convergence in $k$ for both realization is visualized in Figure \ref{convLayerPic}. It is clearly observable that there is no argument for using the G-LOD when dealing with patch communication issues which are storage demanding.

These findings are confirmed in the Figures \ref{series-lod-warp} and \ref{series-lod-isolines}. In Figure \ref{series-lod-warp} we can see a visual comparison of the reference solution with the corresponding full LOD approximations (symmetric and Petrov-Galerkin). Both are almost not distinguishable for the investigated setting with $(h,H,k)=(2^{-8},2^{-4},2)$. Also the coarse parts of the LOD approximations already capture all the essential behavior of the reference solution. In Figures \ref{series-lod-isolines} this is emphasized. Here, we compare the isolines between the reference solution and PG-LOD approximation (respectively its coarse part) and we observe that they are highly matching.

\subsection{PG DG-LOD for the Buckley-Leverett equation}
In this subsection we present the results of a two-phase flow
simulation, based on solving the Buckley-Leverett equation as
discussed in subsection~\ref{ssec:tpflow}.  Recall that, the
Buckley-Leverett equation has two parts, a hyperbolic equation for the
saturation and a elliptic equation for the pressure. For that reason,
we use the operator splitting technique IMPES, that we stated in
subection~\ref{ssec:tpflow}. The elliptic pressure equation is solved
by the PG DG-LOD for which a discontinuous linear finite element
method is utilized that allows for recovering an elemental locally
conservative normal flux. We emphasize that having a locally
conservative flux is typically central for numerical schemes for
solving hyperbolic partial differential equations. In this experiment
we use an upwinding scheme.

Employing PG DG-LOD in this simulation proves to be a very efficient
since the local correctors for the generalized basis functions only
have to be computed once in a preprocessing step, this follows from
the fact the saturation only influence the permeability on the
macroscopic scale. The time stepping in the IMPES scheme using the PG
DG-LOD for the pressure is realized through the following algorithm.

\begin{algo}
\label{algorithm-bl-pg-dg-lod}
\rule{0.9\textwidth}{.7pt}\\ Set the end time $T_{end}$, number of update of
the pressure $n$, number of explicit updates on each implicit step update $m$.
\rule{0.9\textwidth}{.7pt} \\
Algorithm 2: solveBuckleyLeverett($\T_H$, $\T_h$, $T_{\text{end}}$, $n$, $m$)
\rule{0.9\textwidth}{.7pt} \\
Set the initial values: $S = S_0$ and $ i = 1$ \\
Preprocessing step: Compute local corrections $Q^T_h$ for all $T\in\T_H$ with $\lambda(S)=1$\\
\While{$t \leq T_\text{end}$} { Compute pressure $p$ using PG
  DG-LOD at $\left(t +
    T_\text{end}/(n)\right)$\\
  Extract conservative flux $\mathbf{v}$\\
  \While{$t \leq i T_\text{end}/n$} {
    Compute saturation $S$ at $\left(t + T_\text{end}/(n m)\right)$ \\
    Update time: $t + T_\text{end}/(n m) \mapsto t$ \\
  }
  $i + 1 \mapsto i$\\
} \rule{0.9\textwidth}{.7pt}
\end{algo}
\vspace{10pt}

In the numerical experiment we consider the domain $\Omega$ to be the
unit square. The permeability $K_i$ for $i=1,2$ is given by layer
21 and 31 of the Society of Petroleum Engineering comparative
permeability data (\texttt{http://www.spe.org/web/csp}), projected on a
uniform mesh with resolution $2^{-6}$ as illustrated in Figure~\ref{fig:SPE}.
\begin{figure}[htb]
\centering
\includegraphics[scale=0.8]{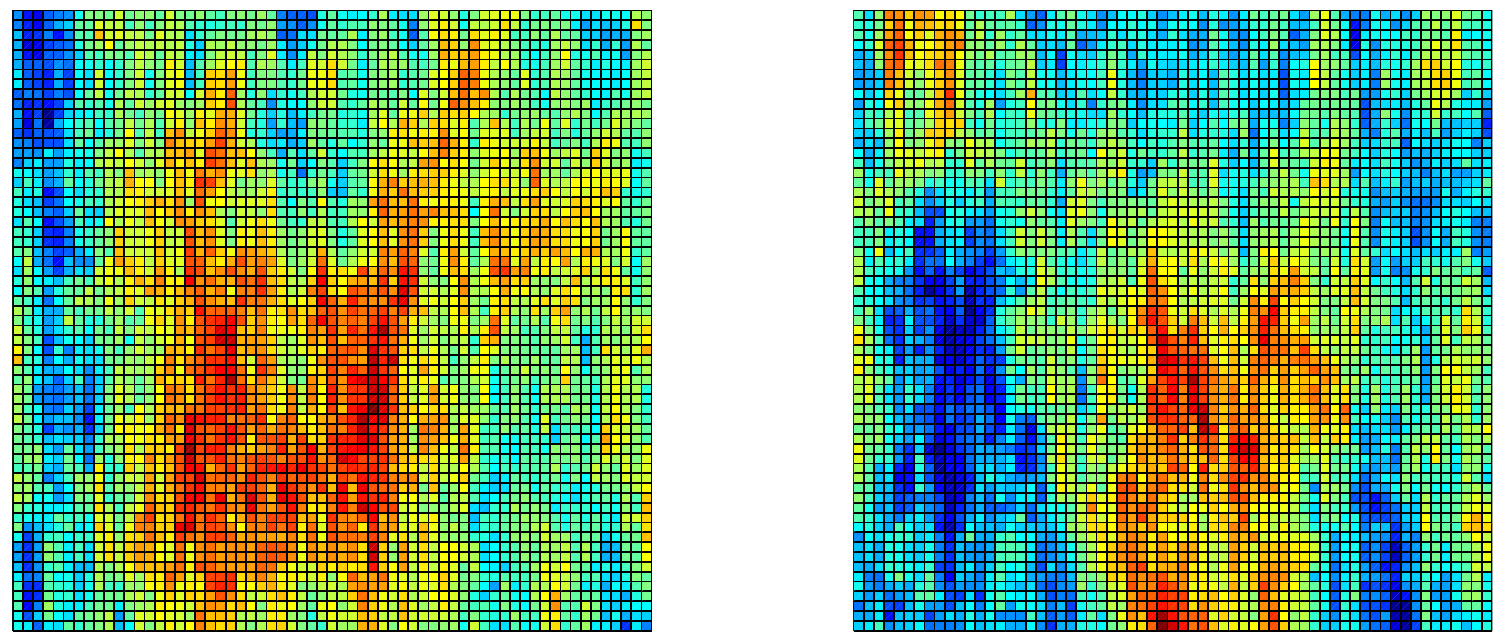}
\caption{The permeability structure of $K_i$ in log scale with,
  $\beta_0/\alpha_0 \approx 5\cdot 10^5$
  for $i=1$ (left) and
  $\beta_0/\alpha_0 \approx 4\cdot 10^5$ for
  $i=2$ (right).}
\label{fig:SPE}
\end{figure}
We consider a microscopic partition $\T_h$ with mesh size size
$h=2^{-8}$ and a macroscopic partition $\T_H$ with mesh size
$H=2^{-i}$ for $i=3,4,5,6$. The patch size is chosen
such that the overall $H$ convergence for the PG DG-LOD is not
effected. A reference solution to the Buckley-Leverett equation is
obtained when both the pressure and saturation equation are computed
on $\T_h$, compared to using Algorithm~\ref{algorithm-bl-pg-dg-lod}
where both the pressure and saturation equation are computed on
$\T_H$. We consider the following setup. For the pressure equation we
use the boundary condition $p=1$ for the left boundary, $p=0$ for the
right boundary, $K\lambda(S)\nabla p=0$ otherwise, and the source
terms $q_w=q_n=0$. For the saturation the initial value is $S=1$ on
the left boundary and $0$ elsewhere. The error is defined by
$e(\cdot,t) := S(\cdot,t)-S^{\mathrm{rel}}(\cdot,t)$, where
$S(\cdot,t)$ is the solution obtained by
Algorithm~\ref{algorithm-bl-pg-dg-lod} (at time $t$) and
$S^{\mathrm{rel}}(\cdot,t)$ is the reference solution (at time
$t$). The errors are measured in the $L^2$-norm. In
  Table~\ref{table-BL-results} we fix the coarse mesh size to be
  $H=2^{-5}$, and compute the error for the permeabilities $K_1$ and
  $K_2$ at the times $T_1:=0.05$, $T_2:=0.25$ and $T_3:=0.45$. A
graphical comparison is shown in Figure~\ref{fig:saturation21} and
\ref{fig:saturation31}. The errors in the $L^2$-norm is less than
$0.1$ for both permeabilities at all times which is quite remarkable
since the coarse mesh $\T_H$ for $H=2^{-5}$ does not
resolve the data. In Table~\ref{table-BL-convergence} we
consider the test case involving permeability $K_1$. We present the $L^2$-errors
at $t=T_2$ for different values of $H$. We basically observe a linear convergence rate in $H/h$
(for fixed $h$) which is just what we would expect (since we only use the coarse part of the LOD pressure approximation).

\begin{table}[ht]
  \caption{The resulting error in relative $L^2$-norm between $S$ and $S^{\mathrm{ref}}$, where $S$ is obtained using PG DG-LOD for the pressure computed on $\T_H$ and $S^{\mathrm{ref}}$ is the reference solution computed on $\T_h$. We have $T_1=0.05$, $T_2=0.25$ and $T_3=0.45$.}
\label{table-BL-results}
\begin{center}
  \begin{tabular}{|c|c|c|c|} \hline
    Data & $\|e(T_1)\|_{L^2(\Omega)}$ & $\|e(T_2)\|_{L^2(\Omega)}$ & $\|e(T_3)\|_{L^2(\Omega)}$ \\ \hline\hline
    $K_1$ & 0.088  &   0.073  &   0.070 \\ \hline
    $K_2$ & 0.058  &   0.087  &   0.079 \\\hline
  \end{tabular}
\end{center}
\end{table}

\begin{figure}[htb]
\centering
\includegraphics[width=12cm,height=7cm]{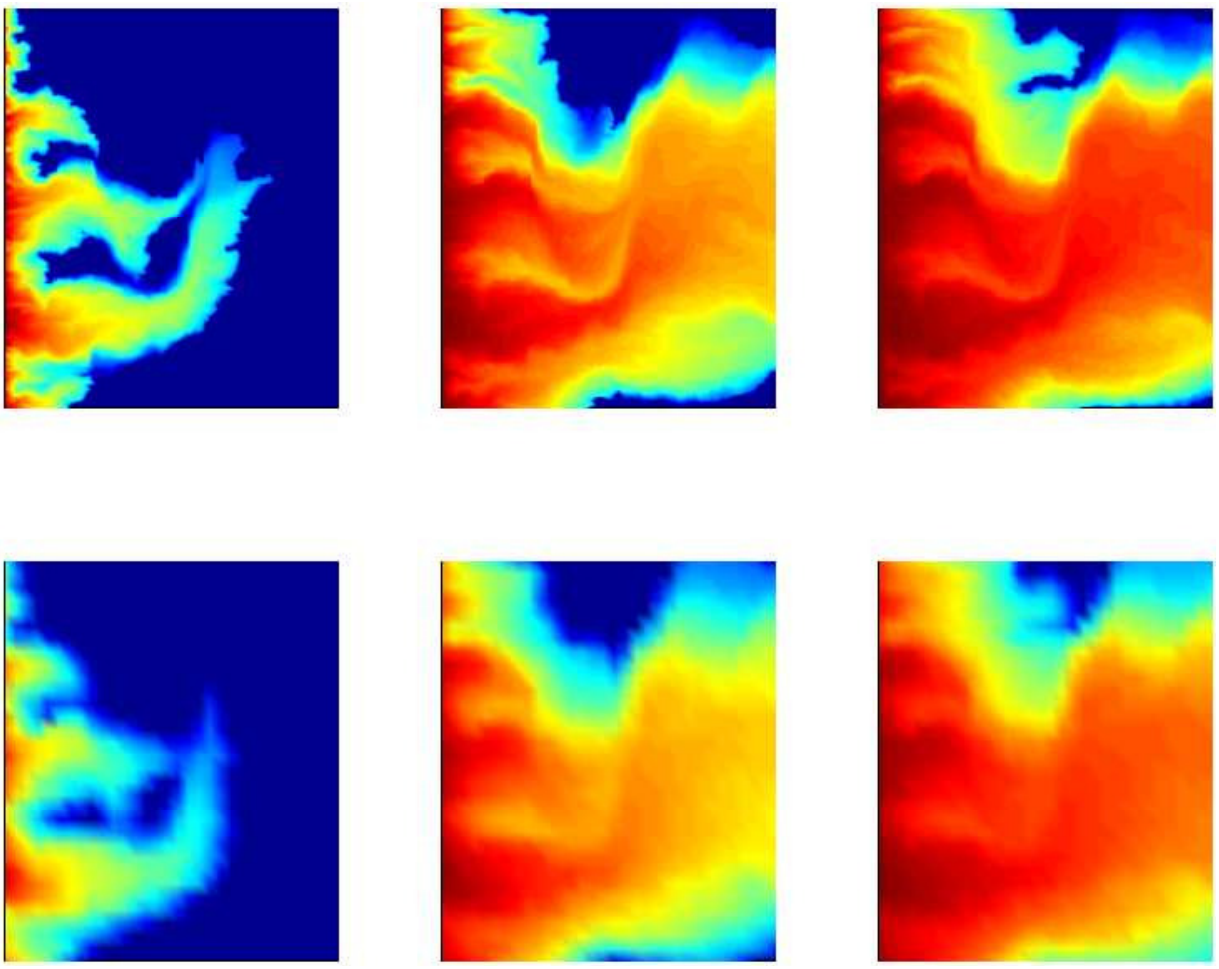}
\caption{The saturation profile using PG DG-LOD for the pressure
  equation on the grid $\T_H$ (bottom) and the reference solution on
  the grid $\T_h$ (upper) at time $T_1=0.05$ (left), $T_2=0.25$ (middle), and
  $T_3=0.45$ (right) using permeability $K_1$.}
\label{fig:saturation21}
\end{figure}

\begin{figure}[htb]
\centering
\includegraphics[width=12cm,height=7cm]{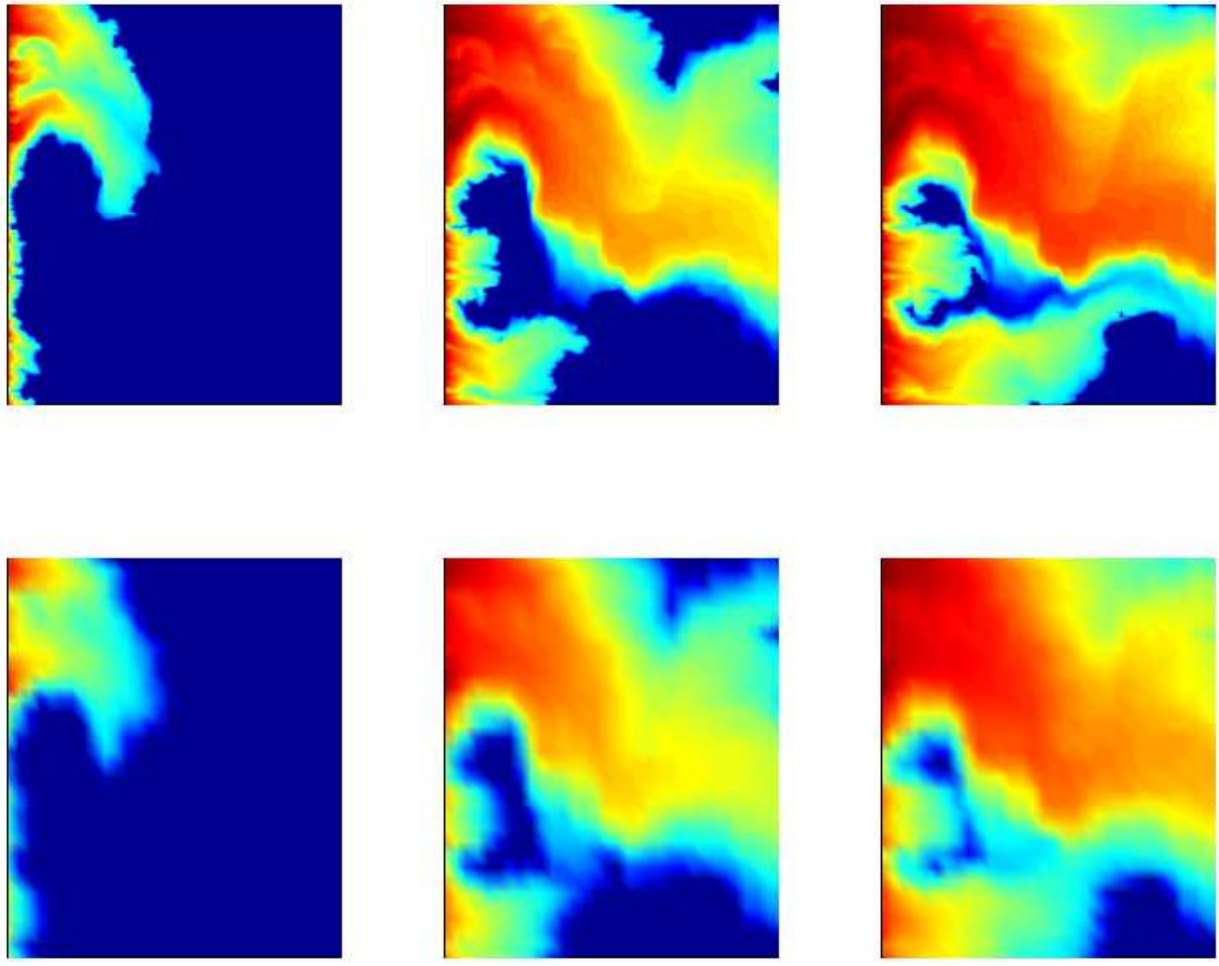}
\caption{The saturation profile using PG DG-LOD for the pressure
  equation on the grid $\T_H$ (bottom) and the reference solution on
  the grid $\T_h$ (upper) at time $T_1=0.05$ (left), $T_2=0.25$ (middle), and
  $T_3=0.45$ (right) using permeability $K_2$.}
\label{fig:saturation31}
\end{figure}

\begin{table}[htb]
  \caption{We consider the test case involving $K_1$. The table depicts relative $L^2$-errors between $S$ and $S^{\mathrm{ref}}$ at $T_2=0.25$ for different values of the coarse mesh size $H$. Here, $S^{\mathrm{ref}}$ denotes the reference solution computed on $T_h$ with $h=2^{-8}$ and $S$ denotes the numerical approximation obtained with the IMPES scheme, using the PG DG-LOD for solving the pressure equation (with coarse mesh $\T_H$). We pick $k= \lceil 2|\log(H)| \rceil$.}
  \label{table-BL-convergence}
  \centering
  \begin{tabular}{|c|c|}\hline
    $H$     & $\|e(T_2)\|_{L^2(\Omega)}$ \\ \hline\hline
    $2^{-3}$ & 0.220  \\ \hline
    $2^{-4}$ & 0.113  \\\hline
    $2^{-5}$ & 0.073  \\ \hline
    $2^{-6}$ & 0.048  \\\hline
  \end{tabular}
\end{table}

\section{Proofs of the main results}
\label{section:proofs:pg:lod}
\setcounter{equation}{0}
In this proof section we will frequently exploit the estimate
\begin{align}
\label{L2-norm-smaller-than-energy} \| v_h \|_{L^2(\Omega)} \lesssim |||v_h |||_h \qquad \mbox{for all } v_h \in V_h,
\end{align}
which is a conclusion from assumption (A7). Let $I_H^{-1}:=(I_H\vert_{V_H})^{-1}$, then (\ref{L2-norm-smaller-than-energy}) can be verified as follows by using (A7).
\begin{align*}
\| v_h \|_{L^2(\Omega)} &\le \| v_h - I_H(v_h) \|_{L^2(\Omega)} + \| I_H ( v_h ) \|_{L^2(\Omega)}\\
&\lesssim  H ||| v_h |||_h + \| (I_H \circ I_H^{-1} \circ I_H) ( v_h ) \|_{L^2(\Omega)} \\
&\lesssim  H ||| v_h |||_h + ||| (I_H^{-1} \circ I_H) ( v_h ) |||_H \lesssim  H ||| v_h |||_h + |||  I_H ( v_h ) |||_H \lesssim  H ||| v_h |||_h + |||  v_h |||_h.
\end{align*}

\subsection{Proof of Theorem~\ref{t:a-priori-local}}
The arguments for establishing the error estimate in $|||\cdot|||_h$-norm is analogous to the standard case, see for example, \cite{Malqvist:Peterseim:2011} or \cite{Henning:Malqvist:2013}. We only recall the main arguments.

\begin{proof}[Proof of Theorem \ref{t:a-priori-local}]
Let $u_H^{\text{\tiny G-LOD}} = (u_H + Q_h(u_H)) \in \Vms$ be the Galerkin LOD solution governed by \eqref{lod-problem-eq}.
Utilizing the notation in (A8), we set $u_{H,\Omega}\in V_H$ to satisfy
\begin{align*}
a_h( u_{H,\Omega} + Q_h^{\Omega}(u_{H,\Omega}), \Phi_H + Q_h^{\Omega}(\Phi_H) ) = ( f ,\Phi_H + Q_h^{\Omega}(\Phi_H) ) \quad \mbox{for all } \Phi_H \in V_H
\end{align*}
and define $e_h:=u_{H,\Omega} + Q_h^{\Omega}(u_{H,\Omega}) - u_h$. Using Galerkin orthogonality, we obtain $a_h( e_h, \Phi ) = 0$ for all $\Phi \in \Vms_{\Omega}$ and hence $e_h \in W_h$ (i.e. $I_H(e_h)=0$). This implies $||| e_h |||_h^2 \lesssim a_h(e_h,e_h) = (f,e_h) = (f, e_h -I_H(e_h))
 \lesssim H \| f\|_{L^2(\Omega)} \hspace{2pt}||| e_h |||_h$ and consequently by energy minimization
\begin{eqnarray*}
\lefteqn{||| u_H^{\text{\tiny G-LOD}} - u_h |||_h = ||| u_H + Q_h(u_H) - u_h |||_h \lesssim ||| u_{H,\Omega} + Q_h(u_{H,\Omega}) - u_h |||_h}\\
&\le&  ||| e_h |||_h + ||| Q_h^{\Omega}(u_{H,\Omega}) - Q_h(u_{H,\Omega}) |||_h \overset{\mbox{(A8)}}{\lesssim}
H \| f\|_{L^2(\Omega)} + (1/H)^p k^{d/2} \theta^k  ||| u_{H,\Omega} + Q_h^{\Omega}(u_{H,\Omega})  |||_h.
\end{eqnarray*}
The bound $||| u_{H,\Omega} + Q_h^{\Omega}(u_{H,\Omega})  |||_h \lesssim \| f\|_{L^2(\Omega)}$ finishes the energy-norm estimate. The estimate in the $L^2$-norm is established in a similar fashion using (\ref{L2-norm-smaller-than-energy}).
\end{proof}

\subsection{Proof of Theorem~\ref{t:a-priori-local-pg}}
We begin with stating and proving a lemma that is required to establish the a priori error estimate.
\begin{lemma}
\label{lemma-1}
For all $v^{{\rm ms}} \in \Vms_{\Omega}$ with $v^{{\rm ms}}=v_H + v^f$, where $v_H\in V_H$ and $v^f \in W_h$, we have
\begin{align}
\label{eqn-lemma-1}\| v^f \|_{L^2(\Omega)} \lesssim H ||| v^{\rm ms} |||_h.
\end{align}
\end{lemma}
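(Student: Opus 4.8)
The plan is to exploit the approximation properties of $I_H$ in (A7) together with the fact that $I_H$ annihilates the remainder component $v^f$. First I would record that, by the direct sum $V_h = V_H \oplus W_h$ in (\ref{def-W_h}), the decomposition $v^{{\rm ms}} = v_H + v^f$ is unique, and that applying $I_H$ gives $I_H(v^{{\rm ms}}) = I_H(v_H) + I_H(v^f) = I_H(v_H)$, since $v^f \in W_h$ lies in the kernel of $I_H$. Composing with $(I_H\vert_{V_H})^{-1}$ and using $v_H \in V_H$ then identifies the coarse component explicitly as $v_H = ((I_H\vert_{V_H})^{-1}\circ I_H)(v^{{\rm ms}})$. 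This identification is the crucial structural fact; note that it uses nothing about $\Vms_{\Omega}$ beyond membership in $V_h$.

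Next I would bound the coarse component in the coarse norm. Using the $|||\cdot|||_H$-stability of $(I_H\vert_{V_H})^{-1}$ and the $|||\cdot|||_H$-stability of $I_H$ (both from (A7)), I obtain
\[
|||v_H|||_H = |||((I_H\vert_{V_H})^{-1}\circ I_H)(v^{{\rm ms}})|||_H \le C_{I_H^{-1}}|||I_H(v^{{\rm ms}})|||_H \le C_{I_H^{-1}}C_{I_H}|||v^{{\rm ms}}|||_h.
\]
This controls $v_H$ in the $V_H$-norm purely in terms of $|||v^{{\rm ms}}|||_h$.

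Finally, the key manipulation is to avoid estimating $|||v^f|||_h$ directly (for which there is no obvious handle) by rewriting $v^f$ through the interpolation error. Since $I_H(v^{{\rm ms}}) = I_H(v_H)$, one has the telescoping identity $v^f = v^{{\rm ms}} - v_H = (v^{{\rm ms}} - I_H(v^{{\rm ms}})) - (v_H - I_H(v_H))$. Applying the two $L^2$-approximation estimates of (A7) -- the first to $v^{{\rm ms}} \in V_h$ in the $|||\cdot|||_h$-norm and the second to $v_H \in V_H$ in the $|||\cdot|||_H$-norm -- and then inserting the previous bound on $|||v_H|||_H$ yields
\begin{align*}
\|v^f\|_{L^2(\Omega)} &\le \|v^{{\rm ms}} - I_H(v^{{\rm ms}})\|_{L^2(\Omega)} + \|v_H - I_H(v_H)\|_{L^2(\Omega)} \\
&\le C_{I_H}H|||v^{{\rm ms}}|||_h + C_{I_H}H|||v_H|||_H \lesssim H|||v^{{\rm ms}}|||_h,
\end{align*}
which is the claim. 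I expect the main (and essentially the only) obstacle to be spotting this telescoping rewriting: a naive triangle inequality $\|v^f\|_{L^2(\Omega)} \le \|v^{{\rm ms}}\|_{L^2(\Omega)} + \|v_H\|_{L^2(\Omega)}$ loses the factor $H$, while trying to bound $|||v^f|||_h$ by $|||v^{{\rm ms}}|||_h$ runs into the mismatch between the $|||\cdot|||_h$ and $|||\cdot|||_H$ norms on $V_H$ (only $|||\cdot|||_H \le C_{H,h}|||\cdot|||_h$ is available, the wrong direction), which the telescoping identity sidesteps entirely.
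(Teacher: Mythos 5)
Your proposal is correct and is essentially the paper's own proof: your telescoping identity $v^f = (v^{\rm ms}-I_H(v^{\rm ms})) - (v_H - I_H(v_H))$ is exactly the paper's algebraic rewriting (there $v_H$ appears in the form $((I_H\vert_{V_H})^{-1}\circ I_H)(v^{\rm ms})$, via $I_H = I_H\circ (I_H\vert_{V_H})^{-1}\circ I_H$), followed by the same two $L^2$-approximation estimates of (A7) and the same stability bound $||| v_H |||_H \lesssim ||| v^{\rm ms} |||_h$. Your version, which names the coarse component explicitly before telescoping, is just a cleaner presentation of the identical argument.
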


\begin{proof}
Because of $I_H(v^f)=0$ and $(I_H^{-1} \circ I_H)(v_H) =v_H$,
\begin{align*}
v^f = v^f - I_{H}(v^f) + v_H - (I_H^{-1} \circ I_H)(v_H+v^f) + I_H(v_H+v^f) - I_H(v_H),
\end{align*}
and therefore with $I_H= I_H \circ I_H^{-1} \circ I_H$ and (A7),
\begin{align*}
\| v^f\|_{L^2(\Omega)} &\le \| v^{\rm ms} - I_{H}(v^{\rm ms}) \|_{L^2(\Omega)} + \| (I_H^{-1} \circ I_H)(v^{\rm ms}) - I_H(v^{\rm ms}) \|_{L^2(\Omega)} \\
&\lesssim H ||| v^{\rm ms} |||_h + \| (I_H^{-1} \circ I_H)(v^{\rm ms}) - (I_H \circ I_H^{-1} \circ I_H)(v^{\rm ms}) \|_{L^2(\Omega)} \\
&\lesssim H ||| v^{\rm ms} |||_h + H ||| (I_H^{-1} \circ I_H)(v^{\rm ms}) |||_H \\
&\lesssim H ||| v^{\rm ms} |||_h.
\end{align*}
In the last step we used again the stability estimates for $I_H^{-1}$ and $I_H$ in (A7).
\end{proof}

\begin{proof}[Proof of Theorem \ref{t:a-priori-local-pg}]
Let $u_{H,\Omega}^{\text{\tiny G-LOD}}$ and $u_{H,\Omega}^{\text{\tiny PG-LOD}}$ be respectively the solution of (\ref{lod-problem-eq}) and
(\ref{lod-problem-eq-pg}) for $U(T)=\Omega$. As in the statement of the theorem, $u_{H}^{\text{\tiny PG-LOD}}$  is the solution of (\ref{lod-problem-eq-pg}) for $U(T)=U_k(T)$.  By adding and subtracting appropriate terms and applying triangle inequality, we arrive at
\begin{equation*}
||| u_h- u_{H}^{\text{\tiny PG-LOD}} |||_h \le \mbox{I}_1 + \mbox{I}_2 + \mbox{I}_3,
\end{equation*}
where $\mbox{I}_1 = ||| u_h- u_{H,\Omega}^{\text{\tiny G-LOD}} |||_h$, $\mbox{I}_2  = ||| u_{H,\Omega}^{\text{\tiny G-LOD}} - u_{H,\Omega}^{\text{\tiny PG-LOD}}|||_h$, and $\mbox{I}_3  = ||| u_{H,\Omega}^{\text{\tiny PG-LOD}} - u_{H}^{\text{\tiny PG-LOD}}|||_h$.
In the following, we estimate these three terms.
Because $e^{(1)} := (u_h-u_{H,\Omega}^{\text{\tiny G-LOD}}) \in W_h$  (c.f. proof of Theorem \ref{t:a-priori-local}) and by applying the Galerkin orthogonality, we get
\begin{equation}
\mbox{I}^2_1 \lesssim a_h(e^{(1)},e^{(1)}) = a_h(u_h,e^{(1)}) = (f,e^{(1)}-I_H(e^{(1)})) \lesssim H \|f \| \hspace{2pt} ||| e^{(1)} |||_h \le 
H \|f \| \hspace{2pt} \mbox{I}_1,
\end{equation}
i.e. $\mbox{I}_1 \hspace{1pt} \lesssim H \|f \|$.
Furthermore, $e^{(2)}:=(u_{H,\Omega}^{\text{\tiny PG-LOD}} -u_{H,\Omega}^{\text{\tiny G-LOD}}) \in \Vms_{\Omega}$ and the splitting $e^{(2)}=e^{(2)}_H+e^{(2)}_f$ with $e^{(2)}_H \in V_H$ and $e^{(2)}_f\in W_h$ (i.e. $I_H(e^{(2)}_f)=0$) holds true. Because
$a_h(u_{H,\Omega}^{\text{\tiny PG-LOD}},e^{(2)}_f) = 0$, we obtain
\begin{equation}
\mbox{I}^2_2 \lesssim a_h(e^{(2)} , e^{(2)} ) =
a_h(u_{H,\Omega}^{\text{\tiny PG-LOD}}, e^{(2)}_H ) - a_h(u_{H,\Omega}^{\text{\tiny G-LOD}}, e^{(2)} ) =
( f, e^{(2)}_H - e^{(2)} ) = -(f,e^{(2)}_f).
\end{equation}
By Lemma~\ref{lemma-1}, we know that $(f,e^{(2)}_f) \le \| f \|_{L^2(\Omega)} \, \| e^{(2)}_f \|_{L^2(\Omega)} \lesssim \| f \|_{L^2(\Omega)} \, H ||| e^{(2)} |||_h = H \| f \|_{L^2(\Omega)} \, \mbox{I}_2$. Again, we conclude
that $\mbox{I}_2 \hspace{1pt} \lesssim  H \|f\|_{L^2(\Omega)}$.

It remains to estimate $\mbox{I}_3$ for which we define $e^{(3)}:= u_{H,\Omega}^{\text{\tiny PG-LOD}} -u_{H}^{\text{\tiny PG-LOD}}$. To simplify the notation, we subsequently denote (according to the definitions of $\Vms$ and $\Vms_{\Omega}$)
$$
u_{H}^{\text{\tiny PG-LOD}}=u_H + Q_h(u_H)  \quad \mbox{and} \quad
u_{H,\Omega}^{\text{\tiny PG-LOD}}=u_H^{\Omega} + Q_h^{\Omega}(u_H^{\Omega}),
$$
where $u_H \in V_H$ and $u_H^{\Omega} \in V_H$. By the definition of problem (\ref{lod-problem-eq-pg}) we have
\begin{align}
\label{help-eqn-in-proof-thrm-3-4-a}a_h(u_{H}^{\text{\tiny PG-LOD}}, \Phi_H) = (f,\Phi_H) = a_h(u_{H,\Omega}^{\text{\tiny PG-LOD}}, \Phi_H).
\end{align}
On the other hand, by the definition of $Q_h^{\Omega}=-P_h$ (see Remark \ref{remark-on-projection}) and since $Q_h(\Phi_H) \in W_h$ we get
\begin{align}
\label{help-eqn-in-proof-thrm-3-4-b}
a_h(u_{H,\Omega}^{\text{\tiny PG-LOD}}, Q_h(\Phi_H)) = 0.
\end{align}
Combining (\ref{help-eqn-in-proof-thrm-3-4-a}) and (\ref{help-eqn-in-proof-thrm-3-4-b}) we get the equality
$$
a_h(u_{H}^{\text{\tiny PG-LOD}}, \Phi_H+Q_h(\Phi_H)) =
a_h(u_{H}^{\text{\tiny PG-LOD}}, Q_h(\Phi_H)) + a_h(u_{H,\Omega}^{\text{\tiny PG-LOD}}, \Phi_H + Q_h(\Phi_H)).
$$
We use this equality cast $u_H$ as a unique solution of a self-adjoint variational equation expressed as
\begin{align*}
a_h( u_H + Q_h(u_H), \Phi_H + Q_h(\Phi_H) ) = F_{u_H,u_H^{\Omega}}(\Phi_H) \quad \mbox{for all} \enspace \Phi_H \in V_H,
\end{align*}
where $F_{u_H,u_H^{\Omega}}$ is a given fixed data function written as
$$
F_{u_H,u_H^{\Omega}}(\Phi_H) =
a_h( u_H + Q_h(u_H), Q_h(\Phi_H) ) + a_h( u_H^{\Omega} + Q_h^{\Omega}(u_H^{\Omega}), \Phi_H + Q_h(\Phi_H) ).
$$
%
%
%
Since this problem is self-adjoint, we get that $u_H$ is equally the minimizer in $V_H$ of the functional
\begin{align*}
J(\Phi_H) := &a_h( \Phi_H + Q_h(\Phi_H) - u_H^{\Omega} - Q_h^{\Omega}(u_H^{\Omega}), \Phi_H + Q_h(\Phi_H) - u_H^{\Omega} - Q_h^{\Omega}(u_H^{\Omega}) )\\
&- 2 a_h( u_H + Q_h(u_H), Q_h(\Phi_H) ).
\end{align*}
Hence we obtain
\begin{equation} \label{eq:zzz} 
\begin{aligned}
\alpha \mbox{I}_3^2 &= \alpha ||| e^{(3)} |||_h^2\\
&\le a_h( e^{(3)}, e^{(3)} )\\
&= J( u_H ) + 2 a_h( u_H + Q_h(u_H), Q_h(u_H) ) \\
&\le J( u_H^\Omega ) + 2 a_h( u_H + Q_h(u_H), Q_h(u_H) )\\
&=  a_h( Q_h(u_H^{\Omega}) - Q_h^{\Omega}(u_H^{\Omega}), Q_h(u_H^{\Omega}) - Q_h^{\Omega}(u_H^{\Omega}) )\\
&\qquad - 2 a_h( u_H + Q_h(u_H), Q_h(u_H) - Q_h(u_H^{\Omega}) )\\
&= \mbox{I}_{31} + \mbox{I}_{32},
\end{aligned}
\end{equation}
where
$$
\begin{aligned}
\mbox{I}_{31} &= a_h( Q_h(u_H^{\Omega}) - Q_h^{\Omega}(u_H^{\Omega}), Q_h(u_H^{\Omega}) - Q_h^{\Omega}(u_H^{\Omega}) )\\
\mbox{I}_{32} &= a_h( Q_h(u_H) -  Q_h^{\Omega}(u_H), Q_h(u_H) - Q_h(u_H^{\Omega}) ).
\end{aligned}
$$
%
%
By the boundedness of $a_h(\cdot,\cdot)$ and applying \eqref{equation-influence-intersections} we get
\begin{equation} \label{eq:fei}
\mbox{I}_{31} \lesssim ||| Q_h(u_H^{\Omega}) - Q_h^{\Omega}(u_H^{\Omega}) |||_h^2 \lesssim
k^{p} \theta^{2k} (1/H)^{2p} ||| u_H^{\Omega} + Q_h^{\Omega}(u_H^{\Omega}) |||^2_h.
\end{equation}
We now need to estimate $u_{H,\Omega}^{\text{\tiny PG-LOD}} = u_H^{\Omega} + Q_h^{\Omega}(u_H^{\Omega})$.
By the inf-sup condition and Lemma~\ref{lemma-1},
\begin{equation}
\label{energy-estimate-pg-lod}
\begin{aligned}
||| u_{H,\Omega}^{\text{\tiny PG-LOD}}  |||^2_h
&\lesssim a_h(u_{H,\Omega}^{\text{\tiny PG-LOD}} , u_{H,\Omega}^{\text{\tiny PG-LOD}})\\
&=  a(u_{H,\Omega}^{\text{\tiny PG-LOD}} , u_H^{\Omega} )\\
&= (f, u_H^{\Omega} )\\
&= (f, u_{H,\Omega}^{\text{\tiny PG-LOD}} ) - (f, Q_h^{\Omega}(u_H^{\Omega}) ) \\
&\lesssim (1+H) \| f \|_{L^2(\Omega)} \hspace{3pt} ||| u_{H,\Omega}^{\text{\tiny PG-LOD}} |||_h,
\end{aligned}
\end{equation}
and thus combining it with \eqref{eq:fei} yields
\begin{equation} \label{eq:ccc}
\mbox{I}_{31} \lesssim k^{d} \theta^{2k} (1/H)^{2p}  \| f \|^2_{L^2(\Omega)} 
\end{equation}
%
%
Furthermore, in a similar fashion we use the boundedness of $a_h(\cdot,\cdot)$ and \eqref{equation-influence-intersections} to get
\begin{equation} \label{eq:xxx}
\begin{aligned}
\mbox{I}_{32} &\lesssim ||| Q_h(u_H) -  Q_h^{\Omega}(u_H) |||_h ~ ||| Q_h(u_H) - Q_h(u_H^{\Omega})  |||_h \\
&\lesssim k^{d/2} \theta^{k} (1/H)^p ||| u_{H}^{\text{\tiny PG-LOD}} |||_h ~  ||| Q_h(u_H) - Q_h(u_H^{\Omega})  |||_h 
\end{aligned}
\end{equation}
By adding and subtracting appropriate terms and applying triangle inequality
\begin{equation} \label{eq:ttt}
||| Q_h(u_H) - Q_h(u_H^{\Omega})  |||_h \le
||| (Q_h - Q_h^\Omega)(u_H) |||_h + ||| Q_h^\Omega(u_H- u_H^\Omega) |||_h + ||| (Q_h^\Omega - Q_h)(u_H^{\Omega})  |||_h. 
\end{equation}
We use \eqref{equation-influence-intersections}  to estimate the first and last terms in \eqref{eq:ttt} to yield
\begin{equation} \label{eq:qiuqiu}
||| (Q_h - Q_h^\Omega)(u_H) |||_h + ||| (Q_h^\Omega - Q_h)(u_H^{\Omega})  |||_h \lesssim
k^{d/2} \theta^{k} (1/H)^p ( ||| u_{H}^{\text{\tiny PG-LOD}} |||_h + ||| u_{H,\Omega}^{\text{\tiny PG-LOD}} |||_h)
\end{equation}
Moreover, by the $|||\cdot|||_h$-stability of $Q_h^{\Omega}$ (which holds true since $Q_h^{\Omega}=-P_h$ with $P_h$ being the orthogonal projection defined in (\ref{projection-orthogonal})), we have
\begin{equation} \label{eq:qiuqiumid}
||| Q_h^\Omega(u_H- u_H^\Omega) |||_h \lesssim  ||| u_H - u_H^{\Omega} |||_h = ||| ((I_H\vert_{V_H})^{-1} \circ I_H)(e^{(3)}) |||_h 
\lesssim  C_{H,h} |||  e^{(3)} |||_h.
\end{equation}
Putting back \eqref{eq:qiuqiumid} and \eqref{eq:qiuqiu} to \eqref{eq:ttt} and place it in \eqref{eq:xxx} gives
\begin{equation}
\begin{aligned}
\mbox{I}_{32} &\lesssim k^{d} \theta^{2k} (1/H)^{2p} ||| u_{H}^{\text{\tiny PG-LOD}} |||_h ( ||| u_{H}^{\text{\tiny PG-LOD}} |||_h + ||| u_{H,\Omega}^{\text{\tiny PG-LOD}} |||_h ) \\
&\hspace*{0.5cm}+  k^{d/2} \theta^{k} (1/H)^p ||| u_{H}^{\text{\tiny PG-LOD}} |||_h ~C_{H,h} |||  e^{(3)} |||_h\\
&\lesssim k^{d} \theta^{2k} (1/H)^{2p} ( ||| u_{H}^{\text{\tiny PG-LOD}} |||^2_h + ||| u_{H,\Omega}^{\text{\tiny PG-LOD}} |||^2_h )\\
&\hspace*{0.5cm}+ \frac{C^2_{H,h}}{\delta} k^{d} \theta^{2k} (1/H)^{2p} ||| u_{H}^{\text{\tiny PG-LOD}} |||^2_h +  \frac{\delta}{4} |||  e^{(3)} |||^2_h,
\end{aligned}
\end{equation}
where in the last step we use the Young's inequality for both terms, and in particular for the second term, inserting a sufficiently small $\delta>0$ so that we can later on hide the term $\frac{\delta}{4} ||| e^{(3)} |||_h^2$ in the left hand side of (\ref{eq:zzz}). Note that the choice of $\delta$ is independent of $H$, $h$ or $k$. Rearranging and collecting common terms in the last inequality gives
$$
\mbox{I}_{32} \lesssim
k^d \theta^{2k} (1/H)^{2p} \left( (1+\frac{C^2_{H,h}}{\delta}) ||| u_{H}^{\text{\tiny PG-LOD}} |||^2 + ||| u_{H,\Omega}^{\text{\tiny PG-LOD}} |||^2 \right) + \frac{\delta}{4} ||| e^{(3)} |||^2_h,
$$
so that we need to estimate $|||u_{H}^{\text{\tiny PG-LOD}}|||_h$ and $|||u_{H,\Omega}^{\text{\tiny PG-LOD}}|||_h$, respectively. The stability of the second piece was established in (\ref{energy-estimate-pg-lod}), while the stability of the first piece is achieved by employing (A9) and (A7) in
$$
\bar{\alpha} ||| u_{H}^{\text{\tiny PG-LOD}} |||_h \hspace{2pt} ||| u_H |||_H \lesssim a_h( u_{H}^{\text{\tiny PG-LOD}}, u_H ) = (f,u_H) \lesssim \|f\|_{L^2(\Omega)} \hspace{3pt} ||| u_H |||_H,
$$
from which we conclude that
\begin{align*}
\mbox{I}_{32} \lesssim k^d \theta^{2k} (1/H)^{2p} \left( (1+\frac{C^2_{H,h}}{\delta}) (1+ \bar{\alpha}^{-1}) \| f \|^2 \right) + \frac{\delta}{4}  \mbox{I}_3^2.
\end{align*}
To summarize, putting this last inequality and \eqref{eq:ccc} to \eqref{eq:zzz} and choosing sufficiently small $\delta$ gives
$$
\mbox{I}_3 \lesssim k^{d/2} \theta^{k} (1/H)^{p} \left( (1+\frac{C_{H,h}}{\delta}) (1+ \bar{\alpha}^{-1}) \| f \| \right),
$$
combining it with the existing estimates for $\mbox{I}_1$ and $\mbox{I}_2$ proves the error estimate in $||| \cdot |||_h$.
Moreover, the estimate in the $L^2$-norm is established in a similar fashion. This completes the proof of the theorem.
\end{proof}

\subsection{Proof of Lemma \ref{inf-sup-stability-pg-lod} and Lemma \ref{inf-sup-stability-dg-pg-lod}}
Next, we prove the inf-sup stability of the Continuous Galerkin LOD in Petrov-Galerkin formulation.

\begin{proof}[Proof of Lemma \ref{inf-sup-stability-pg-lod}]
Let $\Phi^{\operatorname*{ms}} \in \Vms$ be an arbitrary element. To prove the inf-sup condition, we aim to show that
\begin{align}
\label{to-show-proof-inf-sup-stability} \frac{a_h(\Phi^{\operatorname*{ms}},\Phi_H)}{||| \Phi_H |||_h}
 \ge \alpha(k) |||  \Phi^{\operatorname*{ms}} |||_h \hspace*{0.3cm} \text{for} \hspace*{0.3cm} 
\Phi_H = ((I_H\vert_{V_H})^{-1} \circ I_H)( \Phi^{\operatorname*{ms}} ). 
\end{align}
Let therefore $U(T)=U_k(T)$ for fixed $k\in \mathbb{N}$. By the definitions of $\Vms$ and $\Phi_H$, we have $\Phi^{\operatorname*{ms}} = \Phi_H + Q_h(\Phi_H)$, where $Q_h(\Phi_H)$ denotes the corresponding corrector given by (\ref{global-corrector}). By $Q_h^{\Omega}(\Phi_H)$ we denote the corresponding global corrector for the case $U(T)=\Omega$ and the local correctors are denoted by $Q_h^{\Omega,T}(\Phi_H)$.
First, we observe that by $||| \cdot |||_h=||| \cdot |||_H$
\begin{align}
\label{coarse-against-total-estimate}||| \Phi_H |||_h &= ||| ((I_H\vert_{V_H})^{-1} \circ I_H)(\Phi^{\operatorname*{ms}}) |||_h \lesssim ||| \Phi^{\operatorname*{ms}} |||_h,
\end{align}
where we used the $|||\cdot|||_h$-stability of $I_H$ and $(I_H\vert_{V_H})^{-1}$ according to (A7). Consequently,  (\ref{coarse-against-total-estimate})
implies
\begin{align}
\label{fine-against-total-estimate}||| Q_h(\Phi_H) |||_h &\le 
||| \Phi^{\operatorname*{ms}} |||_h + ||| \Phi_H |||_h \lesssim |||\Phi^{\operatorname*{ms}} |||_h,
\end{align}
and thus
\begin{equation}
\label{inf-sup-est}
\begin{aligned}
a_h( \Phi^{\operatorname*{ms}} , \Phi_H ) 
&= a_h( \Phi^{\operatorname*{ms}}  , \Phi^{\operatorname*{ms}} ) - a_h( \Phi^{\operatorname*{ms}} , Q_h(\Phi_H)) \\
&\ge \alpha ||| \Phi^{\operatorname*{ms}} |||_h^2 - a_h( \Phi^{\operatorname*{ms}} , Q_h(\Phi_H)) \\
&\ge C \alpha ||| \Phi_H |||_h \hspace{2pt} ||| \Phi^{\operatorname*{ms}} |||_h - a_h( \Phi^{\operatorname*{ms}} , Q_h(\Phi_H)),
\end{aligned}
\end{equation}
where we have used \eqref{coarse-against-total-estimate} again to bound $||| \Phi^{\operatorname*{ms}} |||_h$ from below. Note here that
$C$ denotes a generic constant. It remains to bound $a_h( \Phi^{\operatorname*{ms}} , Q_h(\Phi_H))$.
By the orthogonality of $\Vms_{\Omega}$ and $W_h$ we have
\begin{align}
\label{lemma-orthogonality-opt-corr} a_h( \Phi_H + Q_h^{\Omega}(\Phi_H) , Q_h(\Phi_H) ) = 0,
\end{align}
and since $a_h(\cdot,\cdot)$ is such that $a_h(v_h,w_h)=0$ for all $v_h,w_h \in V_h$ with supp$(v_h)\cap$supp$(w_h)=\emptyset$ we get by the definition of $Q_h(\Phi_H)$ for every $w_h^T\in W_h(T)$
\begin{align}
\label{lemma-orthogonality-loc-corr} \nonumber a_h( \Phi_H + Q_h(\Phi_H) , w_h^T ) &=
\sum_{K \in \mathcal{T}_H} \left( a_h^K(  \Phi_H , w_h^T ) + a_h( Q_h(\Phi_H) , w_h^T ) \right)\\
\nonumber &= \left( \sum_{K \in \mathcal{T}_H} a_h^K(  \Phi_H , w_h^T ) \right) + a_h( Q_h^T(\Phi_H) , w_h^T )\\
&= a_h( \Phi_H + Q_h^T(\Phi_H) , w_h^T ) = 0.
\end{align}
Using both equalities above and by the boundedness of $a_h(\cdot,\cdot)$ and applying \eqref{fine-against-total-estimate} yields
\begin{equation}
\begin{aligned}
a_h( \Phi^{\operatorname*{ms}} , Q_h(\Phi_H))
&= a_h( \Phi_H + Q_h^{\Omega}(\Phi_H) , Q_h(\Phi_H)) + a_h(Q_h(\Phi_H) - Q_h^{\Omega}(\Phi_H) , Q_h(\Phi_H))\\
&= a_h(Q_h(\Phi_H) - Q_h^{\Omega}(\Phi_H) , Q_h(\Phi_H) - w_h )\\
&\le ||| Q_h(\Phi_H) - Q_h^{\Omega}(\Phi_H) |||_h \hspace{2pt} \frac{||| Q_h(\Phi_H) - w_h |||_h}{||| Q_h(\Phi_H) |||_h} ||| \Phi^{\operatorname*{ms}} |||_h\\
\end{aligned}
\end{equation}
We next estimate $||| Q_h(\Phi_H) - Q_h^{\Omega}(\Phi_H) |||_h$ by applying \eqref{equation-influence-intersections-cg} and establishing an analog
of (\ref{local-energy}) for $Q_h^{\Omega,T}(\Phi_H)$ expressed as
\begin{align}
\label{local-energy-estimate-proof} ||| Q_h^{\Omega,T}(\Phi_H) |||_h^2 & \lesssim ||| \Phi_H |||_{h,T} ||| Q_h^{\Omega,T}(\Phi_H) |||_h,
\end{align}
giving (for $k>0$)
\begin{equation}
\begin{aligned}
 ||| Q_h(\Phi_H) - Q_h^{\Omega}(\Phi_H) |||_h &\lesssim k^{d/2} \theta^{k} \left( \sum_{T\in\T_H} ||| Q_h^{\Omega,T}(\Phi_H) |||_h^2 \right)^{1/2} \\
&\lesssim k^{d/2} \theta^{k} \left( \sum_{T\in\T_H} ||| \Phi_H |||_{h,T}^2 \right)^{1/2}\\
&\lesssim k^{d/2} \theta^{k}  ||| \Phi_H |||_h.
 \end{aligned}
\end{equation}
Thus we end up with
\begin{equation}
a_h( \Phi^{\operatorname*{ms}} , Q_h(\Phi_H)) \lesssim
\Bigg(  \frac{||| Q_h(\Phi_H) - w_h |||_h}{||| Q_h(\Phi_H) |||_h} \Bigg) k^{d/2} \theta^k ||| \Phi_H |||_h ~  ||| \Phi^{\operatorname*{ms}} |||_h,
\end{equation}
which when combined with
(\ref{inf-sup-est}) implies that there exist positive generic constants $C_1,C_2$ (independent of $H$ and $k$) such that
\begin{eqnarray}
\label{final-estimate}\frac{a_h( \Phi^{\operatorname*{ms}} , \Phi_H ) }{||| \Phi_H |||_h \hspace{2pt} ||| \Phi^{\operatorname*{ms}} |||_h}
&\ge& C_1 \alpha - C_2 k^{d/2} \theta^{k} \inf_{w_h \in W_h^T} \frac{ ||| Q_h(\Phi_H) -  w_h |||_h}{||| Q_h(\Phi_H) |||_h}.
\end{eqnarray}
Since $\inf_{w_h \in W_h^T} \frac{ ||| Q_h(\Phi_H) -  w_h |||_h}{||| Q_h(\Phi_H) |||_h}=0$ for $k=0$, estimate (\ref{final-estimate}) holds for all $k \in \mathbb{N}$ and the condition $k>0$ is not required.
The relation
$Q_h(\Phi_H) = \Phi^{\operatorname*{ms}} - ((I_H\vert_{V_H})^{-1} \circ I_H)( \Phi^{\operatorname*{ms}} )$
finishes the proof.
\end{proof}

Finally, we prove the inf-sup stability of the Discontinuous Galerkin LOD in Petrov-Galerkin formulation.

\begin{proof}[Proof of Lemma \ref{inf-sup-stability-dg-pg-lod}]
The main arguments are similar as in the proof of Lemma \ref{inf-sup-stability-pg-lod}. Set $n:=(m+3)/2$. Let $\Phi^{\operatorname*{ms}} = \Phi_H + Q_h(\Phi_H)\in \Vms$ be an arbitrary element and let $U(T)=U_k(T)$ for fixed $k \gtrsim n |\log(H)|$. By the assumptions on $\T_H$ and $\T_h$ and by the definitions of $||| \cdot |||_h$ and $||| \cdot |||_H$ it is easy to see that 
\begin{align*}
||| \Phi_H |||_h \lesssim H^{(1-m)/2} ||| \Phi_H |||_H \qquad \mbox{and} \qquad ||| \Phi_H |||_H \lesssim ||| \Phi^{\operatorname*{ms}} |||_h.
\end{align*}
Consequently we get
\begin{align}
\label{coarse-against-total-estimate-dg}
||| Q_h(\Phi_H) |||_h &\le 
||| \Phi^{\operatorname*{ms}} |||_h + ||| \Phi_H |||_h 
\lesssim (1+H^{(1-m)/2}) ||| \Phi^{\operatorname*{ms}} |||_h.
\end{align}
Thus
\begin{equation}
\label{inf-sup-est-dg}
\begin{aligned}
a_h( \Phi^{\operatorname*{ms}} , \Phi_H ) 
&= a_h( \Phi^{\operatorname*{ms}}  , \Phi^{\operatorname*{ms}} ) - a_h( \Phi^{\operatorname*{ms}} , Q_h(\Phi_H)) \\
&\ge \alpha ||| \Phi^{\operatorname*{ms}} |||_h^2  - a_h( \Phi^{\operatorname*{ms}} , Q_h(\Phi_H))\\
&= \alpha ||| \Phi^{\operatorname*{ms}} |||_h^2  - a_h(Q_h(\Phi_H) - Q_h^{\Omega}(\Phi_H) , Q_h(\Phi_H) ) \\
&\ge \alpha ||| \Phi^{\operatorname*{ms}} |||_h^2 - ||| Q_h(\Phi_H) - Q_h^{\Omega}(\Phi_H) |||_h \hspace{2pt} ||| Q_h(\Phi_H) |||_h\\
&\overset{(\ref{coarse-against-total-estimate-dg})}{\ge} \alpha ||| \Phi^{\operatorname*{ms}} |||_h^2 - ||| Q_h(\Phi_H) - Q_h^{\Omega}(\Phi_H) |||_h \hspace{2pt} (1+H^{(1-m)/2}) ||| \Phi^{\operatorname*{ms}} |||_h.
\end{aligned}
\end{equation}
Using
\begin{align*}
||| Q_h(\Phi_H) - Q_h^{\Omega}(\Phi_H) |||_h &\le C (1/H) k^{d/2} \theta^{k} ||| \Phi_H + Q_h^{\Omega}(\Phi_H) |||_h \\
&\le C (1/H) k^{d/2} \theta^{k} \left( ||| \Phi^{\operatorname*{ms}} |||_h + ||| Q_h(\Phi_H) - Q_h^{\Omega}(\Phi_H) |||_h \right)\\
&\le C  H^{n-1} \left( ||| \Phi^{\operatorname*{ms}} |||_h + ||| Q_h(\Phi_H) - Q_h^{\Omega}(\Phi_H) |||_h \right)
\end{align*}
we obtain that we have for small enough $H$
\begin{align*}
||| Q_h(\Phi_H) - Q_h^{\Omega}(\Phi_H) |||_h &\lesssim H^{n-1} ||| \Phi^{\operatorname*{ms}} |||_h.
\end{align*}
Inserting this into (\ref{inf-sup-est-dg}) gives us
\begin{align*}
a_h( \Phi^{\operatorname*{ms}} , \Phi_H )  & \ge \alpha ||| \Phi^{\operatorname*{ms}} |||_h^2 -
(1+H^{(1-m)/2}) H^{n-1} ||| \Phi^{\operatorname*{ms}} |||_h^2 
\ge C_1 (\alpha - C_2 H) ||| \Phi^{\operatorname*{ms}} |||_h^2. 
\end{align*}
If $H$ is small enough so that $(\alpha - C_2 H)$ is positive, the estimate $||| \Phi_H |||_H \lesssim ||| \Phi^{\operatorname*{ms}} |||_h$ concludes the inf-sup estimate.
\end{proof}

$\\$
{\bf{Acknowledgements.}} We would like to thank the anonymous referees for their valuable comments and their constructive feedback on the original manuscript which helped us to improve this article.

\end{document}